
\documentclass[12pt,a4paper]{article}
\usepackage{amsmath}
\usepackage{amsthm}
\usepackage{amsfonts}
\usepackage{amssymb}
\usepackage{latexsym}

\DeclareFontFamily{U}{futm}{}
\DeclareFontShape{U}{futm}{m}{n}{  <-> s * [.92] fourier-bb}{}
\DeclareSymbolFont{Ufutm}{U}{futm}{m}{n}
\DeclareSymbolFontAlphabet{\mathbb}{Ufutm}

\textwidth=180mm  \textheight=242mm \hoffset=-18mm
\voffset=-20mm

\newtheorem{theorem}{Theorem}
\newtheorem{lemma}{Lemma}
\newtheorem{corollary}{Corollary}

\newtheorem{remark}{Remark}
\newtheorem{algorithm}{Algorithm}


\theoremstyle{definition}
\newtheorem{definition}{Definition}

\newtheorem{example}{Example} 


\begin{document}

\title{\Large{\textbf{Quasigroup based crypto-algorithms}}}
\author{\normalsize {Victor Shcherbacov}}

 \maketitle

\begin{abstract}
\noindent
 Modifications of Markovski quasigroup based crypto-algorithm have been  proposed. Some of these modifications are based on the systems of orthogonal $n$-ary groupoids.  $T$-quasigroups based stream ciphers have been constructed.

\medskip

\noindent \textbf{2000 Mathematics Subject Classification:} 94A60, 20N05, 20N15

\medskip

\noindent \textbf{Key words and phrases:} $n$-ary groupoid, $n$-ary quasigroup, T-quasigroup,  cipher, cryptographical primitive,  system of orthogonal $n$-ary groupoids
\end{abstract}

\tableofcontents

\bigskip

\section{Introduction}

\subsection{Preliminaries}

This paper is an extended variant and a prolongation of the paper \cite{PS_VS_11}.
Information on quasigroups and $n$-ary quasigroups it is possible to find in \cite{VD, 2, 1a, HOP}, on ciphers in \cite{MENEZES, LM}. Some applications of quasigroups in cryptology are described in  \cite{DK1, DK2, OS, Grosek_Sys, SCERB_09_CSJM}.

Two main elementary methods of ciphering the information are known.

(i). Symbols in a plaintext (or in its piece (its bit))  are permuted by some law. One of the first known ciphers of
such kind is cipher "Scital" (Sparta, 2500 years ago).

(ii). All symbols in a fixed alphabet are changed by a law on other letters of this alphabet. One of the first
ciphers of such kind was Cezar's cipher ($x\rightarrow x+3$ for any letter of Latin alphabet, for example $a\rightarrow d, b \rightarrow e$ and so on).

In many contemporary ciphers (DES, old Russian GOST, Blowfish \cite{NM, DPP}) the methods (i) and (ii) are used with some modifications.
Therefore, permutations and substitutions are main elementary cryptographical procedures.

What does  the use of quasigroups in cryptography give us? It gives the same permutations and substitutions but easy generated, requiring not very big volume of a device memory, acting "locally" on only one block of a plain-text.

"Stream ciphers are an important class of encryption algorithms. They encrypt individual characters (usually binary digits) of a plaintext message one at a time, using an encryption transformation which varies with time.

By contrast, block ciphers  tend to simultaneously encrypt groups of characters of a plaintext message using a fixed encryption transformation. Stream ciphers are generally faster than block ciphers in hardware, and have less complex hardware circuitry.

They are also more appropriate, and in some cases mandatory (e.g., in some telecommunications applications), when buffering is limited or when characters must be individually processed as they are received. Because they
have limited or no error propagation, stream ciphers may also be advantageous in situations where transmission errors are highly probable" \cite{MENEZES}.

Stream-ciphers based on quasigroups and their parastrophes were  discovered in the end of the XX-th century \cite{MARKOVSKI, MGB, MARKOV_00}.

Often by enciphering a block (a letter) $B_i$ of a plaintext the previous ciphered block $C_{i-1}$ is used. Notice that Horst Feistel was one of the first who proposed such method of encryption (Feistel net)   \cite{Feistel_73}.

It is clear that by the construction of a stream cipher it is impossible to use method (i) (see above). But it is possible to use method (ii) and Feistel schema. Of course these methods cannot be unique.

\subsection{Basic definitions}

We give some definitions.
A sequence $x_m, x_{m+1}, \dots, x_n$, where $ m, n$ are natural numbers and $m\leq n$,  will be denoted by
$x_m^n$. If $m > n$,  then $x_m^n$ will be considered empty. The sequence $x, \dots, x$ (k times) will be denoted by $\overline{x}^k$. The expression $\overline{1,
n}$ designates the set $\{1, 2, \dots, n\}$ of natural numbers \cite{2}.

A non-empty set $Q$  together with an $n$-ary operation $A : Q^n \rightarrow Q$, $n\geq 2$
is called $n$-groupoid and it is denoted by $(Q, A)$.

It is convenient to define $n$-ary quasigroup in the following manner.
\begin{definition} \label{def2} An $n$-ary groupoid $(Q, A)$ with $n$-ary operation $A$ such
that in the equality $A(x_1,$ $ x_2, \dots, x_n) = x_{n+1}$ the knowledge of any $n$  elements from the elements $x_1, x_2, \dots,$
$ x_n, x_{n+1}$ uniquely specifies the remaining one is called $n$-ary quasigroup  \cite{2}.
\end{definition}

From Definition \ref{def2}  follows \cite{VD, HOP, SCERB_03} that any quasigroup $(Q, A)$ defines else $((n+1)! -1)$ $n$-quasigroups,
so-called parastrophes of quasigroup $(Q,A)$.

In binary case any quasigroup $(Q, A)$ defines else five quasigroups namely  $(Q,{}^{(13)}A)$, $(Q,{}^{(23)}A)$, $(Q,{}^{(12)}A)$, $(Q,{}^{(123)}A)$, $(Q,{}^{(132)}A)$. See \cite{VD, HOP, SCERB_07} for details.

We give  classical equational definition of binary quasigroup \cite{EVANS_49}.

\begin{definition}  \label{EQUT_QUAS_DEF}
 A binary groupoid $(Q, A)$ is called a binary quasigroup if on the set $Q$ there exist
operations ${}^{(13)}A$ and ${}^{(23)}A$ such that in the algebra $(Q, A, {}^{(13)}A, {}^{(23)}A)$ the following
identities are fulfilled:
\begin{equation}
A({}^{(13)}A(x, y), y) = x, \label{(2e)}
\end{equation}
\begin{equation}
{}^{(13)}A(A(x, y), y) = x, \label{(4e)}
\end{equation}
\begin{equation}
A(x, {}^{(23)}A (x, y)) = y, \label{(1e)}
\end{equation}
\begin{equation}
{}^{(23)}A(x, A (x, y)) = y. \label{(3e)}
\end{equation}
\end{definition}

By tradition the operation $A$ is denoted by $\cdot$, ${}^{(23)}A$ by $\backslash$ and ${}^{(13)}A$ by $\slash$.

It is possible to give equational definition of  $n$-ary quasigroup as a generalization of Definition  \ref{EQUT_QUAS_DEF}. We follow \cite{2, Petrescu_10}.

\begin{definition}
 An $n$-ary groupoid $(Q, A)$ is called an $n$-ary quasigroup if on the set $Q$ there exist
operations ${}^{(1,\, n+1)}A$,  ${}^{(2,\, n+1)}A$, $\dots$, ${}^{(n,\, n+1)}A$ such that in the algebra $(Q, A, {}^{(1,\,n+1)}A, \dots, $ $ {}^{(n,\,n+1)}A)$  the following identities are fulfilled for all $i \in \overline{1,n}$:
\begin{equation}
A(x_1^{i-1}, {}^{(i,\, n+1)}A(x_1^n), x_{i+1}^n) = x_i, \label{(1ne)}
\end{equation}
\begin{equation}
{}^{(i,\, n+1)}A(x_1^{i-1}, A(x_1^n), x_{i+1}^n) = x_i. \label{(2ne)}
\end{equation}
\end{definition}

In \cite{GLUKH_89} it is proved that any $n$-ary quasigroup of order $k\geq 7$ is a special kind composition  of binary quasigroups isotopic to a fixed quasigroup.\footnote{The author thanks Prof.~F.M.~Sokhatsky that informed his  about this result of M.M.~Glukhov.}

\begin{definition} Let $(G,\cdot)$ be a groupoid and let $a$ be a fixed element in $G$.
Translation maps $L_a$ (left) and $R_a$  (right) are defined by the following equalities  $L_a x = a\cdot x$, $R_a x = x\cdot a$ for
all $x\in G$. For  quasigroups it is possible to define  a third kind of translation, namely, middle translations. If $P_a$ is a  middle
translation of a quasigroup $(Q,\cdot)$, then  $x\cdot P_{a}x = a$ for all $x\in Q$ \cite{BELAS}.
\end{definition}

It is well known that in a quasigroup $(Q, \cdot)$ any  left and right translation is  a bijective map of the set $Q$ \cite{VD, HOP}.

\subsection{Quasigroup based cryptosystem} \label{Quasigroup based cryptosystem}

We give based on binary quasigroup encoding algorithm. We use \cite{SCERB_03}.

A quasigroup $(Q,\cdot)$ and its $(23)$-parastrophe $(Q,\backslash)$  satisfy the following identities   $ x\cdot (x\backslash y) = y$,
$ x\backslash (x\cdot y) = y$. These are identities (\ref{(1e)}) and (\ref{(3e)}), respectively.

The authors \cite{MARKOVSKI, MGB} propose to use this quasigroup  property  to construct the following stream cipher.

\begin{algorithm} \label{ALG1} Let $Q$ be a non-empty finite  alphabet,  $k$ be a natural number, $u_i,
v_i \in Q$, $i\in \{1,..., k\}$.  Define a quasigroup $(Q, A)$.
It is clear that the quasigroup $(Q, {}^{(23)}A)$ is defined in a unique way.

Take a fixed element $l$ ($l\in Q$), which  is called a leader.

Let $u_1 u_2... u_k$ be a $k$-tuple  of letters from $Q$.

It is proposed  the following ciphering procedure

$v_1 = A(l, u_1)$,

$v_{i}= A(v_{i-1}, u_{i})$, $i= 2,..., k$.

  Therefore we obtain the following cipher-text $v_1v_2 \dots v_k$.

  The deciphering algorithm is constructed in the following way: $u_1= {}^{(23)}A(l, v_1)$,
$u_{i}= {}^{(23)}A(v_{i-1}, v_{i}),$ $i = 2,..., k.$

Indeed ${}^{(23)}A(v_{i-1}, v_{i}) = {}^{(23)}A(v_{i-1}, A(v_{i-1}, u_{i})) \overset{(\ref{(3e)})}{=} u_i.$
\end{algorithm}

Notice, the equality $A = {}^{(23)}A$ is fulfilled if and only if  $A(x, A(x,y)) = y$ for all $x, y \in Q$.

\subsection{Modifications and generalizations} \label{GENERALIZAT_LITTLE_BIT}

The improvements and researches of Algorithm \ref{ALG1} were carried out intensively. Some information on this process is given in \cite{SCERB_03}. We thank our colleagues A.~Krapez, V.~Bakeva, V.~Dimitrova and A.~Popovska-Mitrovikj for the following new information.
\begin{remark}
 In article \cite{BAKEVA_DIM}, the authors find the distribution of $k$-tuples of letters after $n$ applications of quasigroup transformation ($k > n$) (i.e. Algorithm \ref{ALG1}) and give an algorithm for statistical attack in order to discover the original message. Also, they give some conclusions on how to protect the original messages.

 In work \cite{KRAPEZ_10}, Krapez defines parastrophic quasigroup transformation. In \cite{BAKEVA_DIM_POP}, the authors propose a modification of this transformation and give a new classification of quasigroups of order 4. Finally, in [17] the authors presented this
 transformation and gave relationship between the new classification and  the  symmetries of quasigroups.
\end{remark}

Notice, parastrophic transformations from \cite{KRAPEZ_10, DIM_BAK_POP_KR} are  promising for further applications and researches.

In Algorithm \ref{ALG1}  it is possible  to use also a quasigroup $(Q, A)$ and its  $(13)$~-, $(123)$-~, $(132)$-parastrophe since   quasigroup $(Q, A)$ and these  parastrophes fulfill the following identities, namely, identities  (\ref{(4e)}),  (\ref{(3ec)}), and (\ref{(4ec)}),  respectively \cite{SCERB_07, KRAPEZ_10, DIM_BAK_POP_KR}.
\begin{equation}
{}^{(123)}A(A (x, y), x) = y \label{(3ec)}
\end{equation}
\begin{equation}
{}^{(132)}A(y, A(x, y)) = x \label{(4ec)}
\end{equation}

More details in this direction are in \cite{KRAPEZ_10}.

In \cite{MGB},  the authors claimed  that this cipher is resistant  to the brute force attack (exhaustive search) and to  the
statistical attack (in many languages some letters meet more frequently, than other letters)\footnote{The author thanks his  colleagues A.~Krapez, V.~Bakeva, V.~Dimitrova and A.~Popovska-Mitrovikj for this information (private letter).}.
Later similar results were presented in \cite{OS}.

In dissertation of Milan Vojvoda \cite{VOIVODA_04} has been  proved that this cipher is not resistant to chosen ciphertext attack  and chosen plaintext attack.
It is claimed that this cipher is not resistant to   special kind of statistical attack (Slovak language) \cite{VOIVODA_04}.

There exist a  few other ways to generalize Algorithm \ref{ALG1}.
The most obvious way is to increase arity of a quasigroup, i.e. instead of binary to apply $n$-ary ($n\geq 3$) quasigroups. This way was proposed in \cite{SCERB_03, SCERB_03_1} and  was realized in \cite{Petrescu_07, Petrescu_10}. See below Algorithm \ref{ALGn_ar}.
Notice Prof. A.~Petrescu writes that he found this $n$-ary generalization independently.

In \cite{PS_VS_11}, the authors proved that cipher based on Algorithm \ref{ALGn_ar}  is not resistant to chosen ciphertext attack  and chosen plaintext attack.

Some modifications in order to make Algorithm \ref{ALG1} more resistant against known attacks  can be found in  \cite{KRAPEZ_10, DIM_BAK_POP_KR}. One of these attempts, taking into consideration Vojvoda results \cite{VOIVODA_04},   was proposed in \cite{SCERB_09_CSJM}. Namely instead of a binary quasigroup and its parastrophe  it  was proposed to use a system of $n$ $n$-ary orthogonal operations (groupoids).

Also it was proposed to use these two crypto-primitives  together in one cryptographical procedure.

\subsection{A modification of Algorithm \ref{ALG1}}

Sometimes only the use of other record of a mathematical fact leads to a generalization.

We re-write  Algorithm \ref{ALG1} using concept of translation in the following way:
\begin{algorithm} \label{ALG1_TRANSL} Let $Q$ be a non-empty finite  alphabet.  Define a quasigroup $(Q, \cdot)$.
It is clear that the quasigroup $(Q, \overset{(23)}{\cdot} )$ is defined in a unique way.

Take a fixed element $l$ ($l\in Q$), which  is called a leader.

Let $u_1 u_2... u_k$ be a $k$-tuple  of letters from $Q$.

It is proposed  the following ciphering procedure

$v_1 = l\cdot u_1 = L_l u_1$,

$v_{2}= v_{1} \cdot u_{2} = L_{v_{1}}u_{2}$.

$v_{i}= v_{i-1} \cdot u_{i} = L_{v_{i-1}}u_{i}$, $i = 3,..., k$.

Therefore we obtain the following cipher-text $v_1v_2 \dots v_k$.

The deciphering algorithm is constructed in the following way. We have the following cipher-text: $v_1v_2 \dots v_k$.  Recall $L_{a}^{\overset{(23)}{\cdot}} = (L_a^{\cdot})^{-1}$ for any $a \in Q$ \cite{SCERB_03}. Below we shall denote translation $L_{a}^{\overset{(23)}{\cdot}}$ as  $L_a^{\ast}$, translation  $L_{a}^{\cdot}$ as $L_{a}$ for any $a\in Q$.
 Then
 \begin{equation}
\begin{split}
 & u_1= l \overset{(23)}{\cdot} v_1 =  {L^{\ast}_l}\left( v_1 \right) =
{L^{\ast}_l} \left( L_l u_1 \right) = \\ & L_{l}^{-1} \left( L_l u_1 \right) = u_1; \\
&u_{i} = v_{i-1} \overset{(23)}{\cdot}  v_{i} =
{L^{\ast}_{v_{i-1}}} \left(v_{i}\right) =
{L^{\ast}_{v_{i-1}}} \left( L_{v_{i-1}}u_{i}\right) = \\ & L_{v_{i-1}}^{-1}\left( L_{v_{i-1}}u_{i}\right)= u_{i}
\end{split}
\end{equation}
for all $i \in \overline{2, k}$.
\end{algorithm}

From this form  of Algorithm \ref{ALG1} we  can obtain easily  the following generalization. Instead of translations $L_x$, $x\in Q$, we propose to use in the enciphering part of this algorithm powers of these translations, i.e., to use   permutations of the form $L^k_x$, $k\in \mathbb{Z}$,  instead of  permutations of the form $L_x$.

The proposed modification  forces us to use permutations of the form $L^k_x$, $k\in \mathbb{Z}$, also in the decryption procedure.

\begin{algorithm} \label{ALG1_POWER_OF_TRANSL} Let $Q$ be a non-empty finite  alphabet.  Define a quasigroup $(Q, \cdot)$.
It is clear that the quasigroup $(Q, \overset{(23)}{\cdot} )$ is defined in a unique way.

Take a fixed element $l$ ($l\in Q$), which  is called a leader.

Let $u_1 u_2... u_k$ be a $k$-tuple  of letters from $Q$.

It is proposed  the following ciphering procedure
  \begin{equation} \label{ALG_4_equations}
\begin{split}
& v_1 =  L^a_l u_1, a\in \mathbb{Z},\\
& v_{2}=  L^b_{v_{1}}u_{2},  b\in \mathbb{Z},\\
& v_{i}=  L^c_{v_{i-1}}u_{i}, i \in \overline{3, k}, c\in \mathbb{Z}.
\end{split}
\end{equation}
Therefore we obtain the following cipher-text $v_1v_2 \dots v_k$.
 The deciphering algorithm is constructed in the following way. We use notations of Algorithm \ref{ALG1_TRANSL}. Recall $({L^{\ast}_x})^{a} = L^{-a}_x$ for all $x\in Q$.  Then
  \begin{equation}
\begin{split}
 & ({L^{\ast}_l})^{a}\left( v_1 \right) = ({L^{\ast}_l})^{a}\left(L^a_l u_1 \right) = u_1,
 \\
 &({L^{\ast}_{v_1}})^{b}\left( v_2 \right) = ({L^{\ast}_{v_1}})^{b}\left(L^b_{v_1} u_2 \right) = u_2,  \\
 & ({L^{\ast}_{v_{i-1}}})^{c}\left( v_i \right) = ({L^{\ast}_{v_{i-1}}})^{c}(L^c_{v_{i-1}}u_{i})  = u_i, i \in \overline{3, k}.
\end{split}
\end{equation}
\end{algorithm}
Notice, the elements $a, b, c$ in equalities (\ref{ALG_4_equations}) should be vary from step to step in order to protect this Algorithm against  chosen  plain-text and chosen cipher-text attack. It is clear that the right and middle \cite{SCERB_03} translations are also possible to use   in Algorithm \ref{ALG1_POWER_OF_TRANSL} instead of the left translations. See below.

\subsection{$n$-ary analogs of binary algorithms}

We give $n$-ary analog of Algorithm \ref{ALG1} \cite{Petrescu_07, PS_VS_11}.

\begin{algorithm} \label{ALGn_ar} Let $Q$ be a non-empty finite alphabet,  $k$ be a natural number, $u_i,
v_i \in Q$, $i\in \{1,..., k\}$.  Define an $n$-ary quasigroup $(Q, f)$.
It is clear that any quasigroup $(Q, {}^{(i,\, n+1)}f)$ for any fixed value $i$ is defined in a unique way. Below for simplicity we put $i=n$.

Take fixed elements $l_1^{(n-1)(n-1)}$ ($l_i\in Q$), which  are called  leaders.

Let $u_1 u_2... u_k$ be a $k$-tuple  of letters from $Q$.

It is proposed  the following ciphering (encryption) procedure
\begin{equation} \label{seven}
\begin{split}
& v_1 = f(l_1^{n-1}, u_1),  \\
& v_2 = f(l_n^{2n-2}, u_2), \\
& \dots , \\
& v_{n-1} = f(l_{n^2 - 3n +3}^{(n-1)(n-1)}, u_{n-1}), \\
& v_{n}= f(v_{1}^{n-1}, u_{n}), \\
& v_{n+1}= f(v_{2}^{n}, u_{n+1}), \\
& v_{n+2}= f(v_{3}^{n+1}, u_{n+2}), \\
& \dots
\end{split}
\end{equation}
Therefore we obtain the following cipher-text $v_1v_2 \dots, v_{n-1}, v_{n}, v_{n+1}, \dots$.

  The deciphering algorithm also  is constructed similarly with binary case:
  \begin{equation} \label{eight}
\begin{split}
& u_1= {}^{(n,\, n+1)}f(l_1^{n-1}, v_1),  \\
& u_{2}= {}^{(n,\, n+1)}f(l_n^{2n-2}, v_{2}), \\
& \dots , \\
& u_{n-1} = {}^{(n,\, n+1)}f(l_{n^2 - 3n +3}^{(n-1)(n-1)}, v_{n-1})\\
& u_{n}= {}^{(n,\, n+1)}f(v_{1}^{n-1}, v_{n}),\\
& u_{n+1}= {}^{(n,\, n+1)}f(v_{2}^{n}, v_{n+1}), \\
& u_{n+2}= {}^{(n,\, n+1)}f(v_{3}^{n+1}, v_{n+2}), \\
& \dots
\end{split}
\end{equation}
Indeed, for example,  ${}^{(n,\, n+1)}f(v_{1}^{n-1}, v_{n}) = {}^{(n,\, n+1)}f(v_{1}^{n-1}, f(v_{1}^{n-1}, u_{n})) \overset{(\ref{(2ne)})}{=} u_n.$
\end{algorithm}

\begin{remark}
It is easy to see that in encryption procedure  (equalities  (\ref{seven})) and, therefore, in decryption procedure (equalities (\ref{eight})) it is possible to use more than one fixed $n$-quasigroup operation $f$.
\end{remark}

Below we shall denote this encryption algorithm as $G(u)$, because on any step it is enciphered only one element of a plaintext.
Probably  it makes sense to use in Algorithm  \ref{ALGn_ar}  irreducible 3-ary or 4-ary finite quasigroup \cite{2, BORISENKO_V_V, AKIV_GOLD_00, AKIV_GOLD_01}.
We give an example of 3-ary irreducible  quasigroup $(Q, A)$ of order 4 \cite[p. 115]{2}.
\begin{example}\label{TERN_NO_REDUC_QUS}
 \[
\begin{array}{cccc}
\begin{array}{c|cccc}
A_0 & 0 & 1 & 2 & 3 \\
\hline
0  & 0 & 1 & 2 & 3 \\
1  & 1 & 2 & 3 & 0 \\
2  & 2 & 3 & 0 & 1 \\
3  & 3 & 0 & 1 & 2 \\
\end{array}
&
\begin{array}{c|cccc}
A_1 & 0 & 1 & 2 & 3 \\
\hline
0  & 1 & 0 & 3 & 2 \\
1  & 0 & 1 & 2 & 3 \\
2  & 3 & 2 & 1 & 0 \\
3  & 2 & 3 & 0 & 1 \\
\end{array}
&
\begin{array}{c|cccc}
A_2 & 0 & 1 & 2 & 3 \\
\hline
0  & 2 & 3 & 0 & 1 \\
1  & 3 & 0 & 1 & 2 \\
2  & 0 & 1 & 2 & 3 \\
3  & 1 & 2 & 3 & 0 \\
\end{array}
&
\begin{array}{c|cccc}
A_3 & 0 & 1 & 2 & 3 \\
\hline
0  & 3 & 2 & 1 & 0 \\
1  & 2 & 3 & 0 & 1 \\
2  & 1 & 0 & 3 & 2 \\
3  & 0 & 1 & 2 & 3 \\
\end{array}
\end{array}
\]
Notice $A(0, 1, 2) = A_0(1, 2) = 3,$ $A(2, 3, 2) = A_2(3, 2) = 3.$ Moreover $A(0, 1, x) = A(2, 3, x)$ for any  $x\in Q$. Then translations $T(0, 1, -)$ and $T(2, 3, -)$ are equal, pairs of leaders $(0, 1)$ and $(2, 3)$ are equal from cryptographical point of view.
\end{example}

Recall there exist two groups of order 4, namely cyclic group $Z_4$  and Klein group $Z_2\times Z_2$. Any binary quasigroup of order 4 is a group isotope \cite{A1,A2}.
\begin{lemma}
      Quasigroup from Example \ref{TERN_NO_REDUC_QUS} is not an isotope of a $3$-ary group $(Q, f)$ with the form $f(x_1^3) = x_1+x_2+x_3$ where $(Q, +)$ is a binary group of order 4.
\end{lemma}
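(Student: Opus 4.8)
The plan is to argue by contradiction, extracting from a hypothetical isotopy a rigid combinatorial invariant that the quasigroup $(Q,A)$ of Example \ref{TERN_NO_REDUC_QUS} visibly violates. First I would suppose that $(Q,A)$ is an isotope of $f(x_1^3)=x_1+x_2+x_3$, i.e. that there exist bijections $\alpha_1,\alpha_2,\alpha_3,\gamma$ of $Q$ and a group $(Q,+)$ of order $4$ with
\[
A(x_1,x_2,x_3)=\gamma(\alpha_1 x_1+\alpha_2 x_2+\alpha_3 x_3)
\]
for all $x_1,x_2,x_3\in Q$. Since the only groups of order $4$ are $\mathbb{Z}_4$ and $\mathbb{Z}_2\times\mathbb{Z}_2$, the operation $+$ is abelian and cancellative; these are the only facts about $+$ the argument will use, so both cases are handled uniformly.

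Next I would fix the first two arguments and study the unary maps $T_{a,b}\colon x\mapsto A(a,b,x)$, together with the relation $(a,b)\sim(a',b')$ defined by $T_{a,b}=T_{a',b'}$. Under the isotopy, $T_{a,b}(x)=\gamma(\alpha_1 a+\alpha_2 b+\alpha_3 x)$, so applying $\gamma^{-1}$ and cancelling the common summand $\alpha_3 x$ (legitimate because $+$ is abelian and cancellative) yields
\[
T_{a,b}=T_{a',b'}\iff \alpha_1 a+\alpha_2 b=\alpha_1 a'+\alpha_2 b'.
\]
Thus the $\sim$-class of a pair $(a,b)$ is exactly the level set $\{(a',b'):\alpha_1 a'+\alpha_2 b'=\alpha_1 a+\alpha_2 b\}$. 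Because $\alpha_1,\alpha_2$ are bijections and $+$ is a group operation, for every prescribed value and every $a'\in Q$ there is a unique $b'$ solving the equation; hence each level set, and so each $\sim$-class, contains exactly $|Q|=4$ pairs. This is the rigid invariant: in any isotope of $f$ the relation $\sim$ partitions the $16$ pairs into classes all of size $4$.

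Finally I would contradict this using the explicit tables, reading off $T_{a,b}$ as the $b$-th row of the table $A_a$ (so that $A(a,b,x)=A_a(b,x)$). As already recorded in Example \ref{TERN_NO_REDUC_QUS}, $A(0,1,x)=A(2,3,x)$, both equal to the row $(1,2,3,0)$, so $(0,1)\sim(2,3)$. A scan of the remaining fourteen rows of $A_0,A_1,A_2,A_3$ shows that none of them equals $(1,2,3,0)$, whence the $\sim$-class of $(0,1)$ is precisely $\{(0,1),(2,3)\}$ and has size $2$. Since $2\neq 4$, this is incompatible with the invariant just established, and the assumed isotopy cannot exist.

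The only genuinely laborious point is the last one: verifying that the class of $(0,1)$ has size exactly $2$ rather than $4$ requires checking the sixteen head-rows of the four tables and confirming that $(1,2,3,0)$ occurs only twice among them. Everything else is structural, and in particular the argument never distinguishes $\mathbb{Z}_4$ from $\mathbb{Z}_2\times\mathbb{Z}_2$, using only that a group of order $4$ is abelian and cancellative.
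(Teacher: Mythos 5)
Your proof is correct, and it takes a genuinely different route from the paper's. The paper disposes of the lemma in one line by citation: by \cite[Corollary, p.~115]{2}, every isotope of a $3$-ary group of the form $f(x_1^3)=x_1+x_2+x_3$ is reducible, while the quasigroup of Example~\ref{TERN_NO_REDUC_QUS} is irreducible (a fact itself imported from \cite{2}), so it cannot be such an isotope. You avoid reducibility altogether and extract the combinatorial obstruction directly: in any isotope $\gamma(\alpha_1x_1+\alpha_2x_2+\alpha_3x_3)$ the translations $T_{a,b}=A(a,b,-)$ coincide precisely on the level sets of $(a,b)\mapsto\alpha_1a+\alpha_2b$, so the sixteen pairs split into four classes of size exactly four; but in the example the translation $(1,2,3,0)$ arises from exactly the two pairs $(0,1)$ and $(2,3)$. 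Your cancellation step is sound (right cancellation alone suffices, so commutativity is not even needed), and the table scan checks out: the six $\sim$-classes of the example have sizes $4,4,2,2,2,2$, violating the invariant. What your argument buys is self-containedness and hand-verifiability, since the paper's proof rests on two facts accepted by citation --- including the irreducibility of the example, which is never proved in the paper; what the paper's route buys is brevity and generality. Note also that your invariant is exactly the shadow of the cited theory: if $A$ were reducible as $B(C(x_1,x_2),x_3)$, the classes of your relation $\sim$ would be the fibers of the Latin square $C$, hence all of size $4$; so in effect you re-prove, in the one instance needed, the corollary that the paper invokes.
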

\begin{proof}
If a quasigroup is an isotope  of a $3$-ary group $(Q, f)$ with the form $f(x_1^3) = x_1+x_2+x_3$ where $(Q, +)$ is a binary group, then this quasigroup is reducible \cite[Corollary, p. 115]{2}.
\end{proof}

A translation of $n$-ary quasigroup $(Q, f)$ ($n>2$) will be denoted as $ T(a_1, \dots, $ $a_{i-1}, -,$
$a_{i+1}, \dots, a_n)$, where $a_i\in Q$ for all $i\in \overline {1,n}$ and
\begin{equation*}
T(a_1, \dots, a_{i-1}, -, a_{i+1}, \dots, a_n) x = f(a_1, \dots, a_{i-1}, x, a_{i+1}, \dots, a_n)
\end{equation*}
for all $x\in Q$.

 From definition of $n$-ary quasigroup  follows that any translation of $n$-ary quasigroup
 $(Q,f)$ is a permutation of the set $Q$.

\begin{lemma}\label{INVERSE_TRANS_IN_N_PAR}
If ${}_fT(a_1, \dots, a_{n-1}, -)$ is a translation of a quasigroup $(Q, f)$, then
$${}_fT^{-1}(a_1, \dots, a_{n-1}, -) = {}_{{}^{(n, n+1)}f} T(a_1, \dots, a_{n-1}, -)$$
\end{lemma}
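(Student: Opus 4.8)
The plan is to verify the claimed equality of permutations by a direct computation, using the two defining identities of the parastrophe ${}^{(n,n+1)}f$ specialized to the index $i=n$. I would write $L := {}_fT(a_1, \dots, a_{n-1}, -)$, so that $Lx = f(a_1, \dots, a_{n-1}, x)$, and $M := {}_{{}^{(n,n+1)}f}T(a_1, \dots, a_{n-1}, -)$, so that $Mx = {}^{(n,n+1)}f(a_1, \dots, a_{n-1}, x)$. The goal is then to show $M = L^{-1}$, i.e.\ that $M$ and $L$ are mutually inverse permutations of $Q$.

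First I would record that both $L$ and $M$ are indeed permutations of $Q$: this is exactly the fact, stated just above the lemma, that every translation of an $n$-ary quasigroup is a permutation, applied once to $(Q,f)$ and once to its parastrophe $(Q,{}^{(n,n+1)}f)$. Hence it suffices to establish a single one-sided identity, say $M \circ L = \mathrm{id}_Q$, since a one-sided inverse of a permutation of a set is automatically a two-sided inverse.

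Then I would compute, for an arbitrary $x\in Q$,
\[
M(Lx) = {}^{(n,n+1)}f\bigl(a_1, \dots, a_{n-1}, f(a_1, \dots, a_{n-1}, x)\bigr).
\]
Setting $x_1 = a_1, \dots, x_{n-1} = a_{n-1}$ and $x_n = x$ in identity (\ref{(2ne)}) with $i=n$ collapses the right-hand side to $x$, giving $M\circ L = \mathrm{id}_Q$. For completeness one checks the reverse composition in the same way: $L(Mx) = f\bigl(a_1, \dots, a_{n-1}, {}^{(n,n+1)}f(a_1, \dots, a_{n-1}, x)\bigr) = x$ by identity (\ref{(1ne)}) with $i=n$. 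Either computation yields $M = L^{-1}$, which is the assertion.

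I do not expect any genuine obstacle here, since the statement is a direct unfolding of the parastrophe identities. The only point requiring care is the bookkeeping: one must check that the fixed entries $a_1,\dots,a_{n-1}$ occupy the first $n-1$ argument slots of both $f$ and ${}^{(n,n+1)}f$, so that the specialization $i=n$ of (\ref{(1ne)}) and (\ref{(2ne)}) applies verbatim (with the trailing block $x_{i+1}^n$ empty), and one should be explicit that it is the permutation property which upgrades the one-sided identity into a genuine inverse.
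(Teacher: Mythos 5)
Your proof is correct and follows essentially the same route as the paper: the paper's own proof is exactly the computation ${}^{(n,n+1)}f(a_1,\dots,a_{n-1}, f(a_1,\dots,a_{n-1},x)) = x$ via identity (\ref{(2ne)}) with $i=n$, which is your $M\circ L = \mathrm{id}_Q$ step. Your additional remarks (the permutation property upgrading a one-sided inverse to a two-sided one, and the reverse composition via (\ref{(1ne)})) only make explicit what the paper leaves tacit.
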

\begin{proof}
In the proof we omit the symbol $f$ in the notation of translations of  quasigroup $(Q, f)$.
We have
\begin{equation}
\begin{split}
& T^{-1} (a_1, \dots, a_{n-1}, -) (T(a_1, \dots, a_{n-1}, -)x) = \\ &
T^{-1} (a_1, \dots, a_{n-1}, -) f(a_1, \dots,  a_{n-1}, x) = \\ &
{{}^{(n, n+1)}f} (a_1, \dots, a_{n-1}, f(a_1, \dots,  a_{n-1}, x)) \overset{ (\ref{(2ne)})}{=}  x
\end{split}
\end{equation}
\end{proof}

We propose an  $n$-ary analogue of Algorithm \ref{ALG1_POWER_OF_TRANSL}.

\begin{algorithm} \label{ALG1_POWER_OF_TRANSL_N_ARY} Let $Q$ be a non-empty finite  alphabet.  Define an $n$-ary quasigroup $(Q, f)$.
It is clear that the quasigroup $(Q, {}^{(n, n+1)}f )$ is defined in a unique way.

Take fixed elements $l_1^{(n-1)(n-1)}$ ($l_i\in Q$), which  are called  leaders.

Let $u_1 u_2... u_k$ be a $k$-tuple  of letters from $Q$.

It is proposed  the following ciphering (encryption) procedure

  \begin{equation} \label{ALG_4_equations_N_ARY_TR}
\begin{split}
& v_1 = T^a(l_1, l_2, \dots, l_{n-1}, u_1),  \\
& v_2 = T^b(l_n, l_{n+1}, \dots, l_{2n-2}, u_2), \\
& \dots , \\
& v_{n-1} = T^c(l_{n^2 - 3n +3}, \dots, l_{(n-1)(n-1)}, u_{n-1}), \\
& v_{n}= T^d(v_{1}, \dots, v_{n-1}, u_{n}), \\
& v_{n+1}= T^e(v_{2}, \dots, v_{n}, u_{n+1}), \\
& v_{n+2}= T^t(v_{3}, \dots, v_{n+1}, u_{n+2}), \\
& \dots
\end{split}
\end{equation}
Therefore we obtain the following cipher-text $v_1v_2 \dots v_k$.

 Taking into consideration Lemma \ref{INVERSE_TRANS_IN_N_PAR} we can say that  deciphering algorithm is possible, it is constructed similarly with the deciphering in  Algorithm \ref{ALG1_POWER_OF_TRANSL}.
\end{algorithm}

\begin{remark}
It is easy to see that in Algorithm \ref{ALG1_POWER_OF_TRANSL_N_ARY} it is possible to use  various quasigroup translations and to take  quasigroups of various arity.
\end{remark}

\section{Ciphers based on orthogonal $n$-ary groupoids}

\subsection{Some definitions}

We give classical definition of orthogonality of  $n$-ary operations \cite{YAC, BEL_YAK}.
\begin{definition} \label{n_ORTHOGON}
 $n$-ary groupoids  $(Q, f_1)$, $(Q, f_2)$, $\dots$, $(Q, f_n)$ are called orthogonal, if for any fixed
$n$-tuple $a_1, a_2, \dots, a_n$ the following system of equations

\begin{equation} \label{SYSTEM_16}
\left\{
\begin{split}
& f_1(x_1, x_2, \dots , x_n) = a_1\\
& f_2(x_1, x_2, \dots , x_n) = a_2 \\
& \dots \\
& f_n(x_1, x_2, \dots , x_n) = a_n
\end{split}
\right.
\end{equation}
 has a unique solution.
\end{definition}

If the set $Q$ is  finite, then any system of $n$ orthogonal $n$-ary groupoids $(Q, f_i)$ $i\in \overline{1, n}$, defines a permutation of the set $Q^n$ and vice versa \cite{SYSTEMS_ORTHOGON, BEL_YAK, YAC}. Therefore if  $|Q| = q$,  then  there exist $(q^n)!$  systems of  $n$-ary orthogonal groupoids defined on the set $Q$.

There exist various generalizations of definition of orthogonality of $n$-ary operations. Fresh generalizations are in \cite{SOKH_FRYZ, SOKH_FRYZ_LOOPS}.

\begin{definition} \label{K_ORTHOG}
 $n$-ary groupoids  $(Q, f_1)$, $(Q, f_2)$, $\dots$, $(Q, f_k)$ ($2 \leq k\leq n$)  given on a set $Q$ of order $m$ are called orthogonal if the system of equations (\ref{SYSTEM_16})
 has exactly $m^{n-k}$ solutions for any k-tuple  $a_1, a_2, \dots, a_k$, where $a_1, a_2, \dots, a_k \in Q$ (see \cite{Bel_05}).
\end{definition}

If $k=n$, then from Definition \ref{K_ORTHOG} we obtain standard Definition \ref{n_ORTHOGON}.
Definition of orthogonality of binary systems has rich and  long history \cite{DK1}. About $n$-ary case, for example, see \cite{EVANS_1}.

\subsection{Construction of orthogonal $n$-ary groupoids}

In the following example  sufficiently convenient and  general  way for the construction of systems of orthogonal $n$-ary groupoids is given.

\begin{example}
  Define operations $A_1(x_1, x_2, x_3)$, $A_2(x_1, x_2, x_3)$,  $A_3(x_1, x_2, x_3)$  over the set $M = \{ 0, \,  1, \, 2 \, \}$  in the following way. Take all $27$ triplets $ K = \{ (R_i,\, S_i, \, T_i) \, \mid \, R_i, S_i, T_i \in M,  i \in \overline{1, 27}\}$ in any fixed order and put
\begin{equation*}
\begin{split}
& A_1(0, 0, 0) = R_1, A_1(0, 0, 1) = R_2, A_1(0, 0, 2) = R_3, \dots,  A_1(2, 2, 2) = R_{27}, \\
& A_2(0, 0, 0) = S_1, A_2(0, 0, 1) = S_2, A_2(0, 0, 2) = S_3, \dots,  A_2(2, 2, 2) = S_{27}, \\
& A_3(0, 0, 0) = T_1, A_3(0, 0, 1) = T_2, A_3(0, 0, 2) = T_3, \dots,  A_3(2, 2, 2) = T_{27}.
\end{split}
\end{equation*}
The operations $A_1$, $A_2$ and $A_3$ form a   system of orthogonal operations. If we take this $27$ triplets in other order, then we obtain other system of orthogonal $3$-ary groupoids.
\end{example}

This way gives a possibility to construct easily  inverse system $B$ of orthogonal $n$-ary operations to a fixed system $A$ of orthogonal $n$-ary operations. Recall inverse system means that $B(A(x_1^n)) = x_1^n$, $x_i \in Q$.

\begin{example} \label{TERNARY_GROUPOIDS_}  \cite{PS_VS_11}.
We give example of three orthogonal ternary groupoids that are defined on four-element set $\{0,\,1,\, 2,\,3 \}$. Multiplication table of the first groupoid (in fact, of a quasigroup) is given in Example \ref{TERN_NO_REDUC_QUS}. Below we give multiplication tables of other two $3$-ary groupoids.

\[
\begin{array}{cccc}
\begin{array}{c|cccc}
B_0 & 0 & 1 & 2 & 3 \\
\hline
0  & 3 & 0 & 1 & 3 \\
1  & 0 & 2 & 3 & 0 \\
2  & 1 & 2 & 1 & 3 \\
3  & 1 & 1 & 2 & 2 \\
\end{array}
&
\begin{array}{c|cccc}
B_1 & 0 & 1 & 2 & 3 \\
\hline
0  & 2 & 1 & 1 & 0 \\
1  & 2 & 3 & 3 & 0 \\
2  & 0 & 2 & 1 & 3 \\
3  & 0 & 0 & 3 & 1 \\
\end{array}
&
\begin{array}{c|cccc}
B_2 & 0 & 1 & 2 & 3 \\
\hline
0  & 1 & 2 & 0 & 0 \\
1  & 2 & 0 & 3 & 1 \\
2  & 0 & 2 & 3 & 2 \\
3  & 3 & 2 & 1 & 1 \\
\end{array}
&
\begin{array}{c|cccc}
B_3 & 0 & 1 & 2 & 3 \\
\hline
0  & 3 & 3 & 2 & 2 \\
1  & 0 & 1 & 2 & 1 \\
2  & 0 & 2 & 0 & 3 \\
3  & 3 & 1 & 0 & 3 \\
\end{array}
\end{array}
\]
\[
\begin{array}{cccc}
\begin{array}{c|cccc}
C_0 & 0 & 1 & 2 & 3 \\
\hline
0  & 3 & 1 & 2 & 0 \\
1  & 2 & 1 & 1 & 2 \\
2  & 0 & 1 & 0 & 1 \\
3  & 3 & 1 & 2 & 3 \\
\end{array}
&
\begin{array}{c|cccc}
C_1 & 0 & 1 & 2 & 3 \\
\hline
0  & 1 & 2 & 1 & 3 \\
1  & 1 & 2 & 3 & 1 \\
2  & 0 & 2 & 2 & 0 \\
3  & 1 & 3 & 1 & 1 \\
\end{array}
&
\begin{array}{c|cccc}
C_2 & 0 & 1 & 2 & 3 \\
\hline
0  & 3 & 3 & 0 & 0 \\
1  & 2 & 1 & 0 & 1 \\
2  & 3 & 3 & 2 & 0 \\
3  & 3 & 0 & 2 & 3 \\
\end{array}
&
\begin{array}{c|cccc}
C_3 & 0 & 1 & 2 & 3 \\
\hline
0  & 2 & 1 & 0 & 0 \\
1  & 2 & 0 & 2 & 3 \\
2  & 3 & 3 & 2 & 0 \\
3  & 2 & 0 & 0 & 3 \\
\end{array}
\end{array}
\]
\end{example}

From formula $(q^n)!$  follows that there exist $(4^3)! = 64!$   orthogonal systems of $3$-ary groupoids over a set of order 4.

\subsection{Ciphers on base of orthogonal systems of $n$-ary operation}

Here we propose to use a system  of orthogonal $n$-ary  groupoids as additional procedure in order to construct almost-stream cipher \cite{SCERB_09_CSJM}.

 Orthogonal systems of $n$-ary quasigroups were studied in \cite{Stojakovic_86, Syrbu_90, DUD_SYRB}.
Such  systems  have  more uniform distribution of elements of base set and therefore such systems may be more preferable in protection against statistical cryptanalytic  attacks.

\begin{algorithm} \label{ALG3}  \cite{PS_VS_11}. Let $A$ be a non-empty finite alphabet, $k$ be a natural number, $x_1^t$ be a plaintext.
 Take  a system  of $n$ n-ary orthogonal operations $(A,f_i)$, $i = 1, 2, \dots, n$. This system defines a permutation $F$ of the set $A^n$.
  We propose  the following enciphering procedure.

\begin{itemize}
   \item Step 1:  $y_1^n = F^{\,l}(x_1^n)$, where $l\geq 1$, $l$ is a natural number, $l$ is vary from one  enciphering round to other. If $t<n$, then we can add to plaintext some "neutral" symbols.

  \item On the Steps $\geq 2$ it is possible to use  Feistel schema \cite{Feistel_73, MENEZES}. For example, we can do the following  enciphering procedure $z_1^n = F^s(y_2, y_{3}, \dots, y_{n}, x_{n+1})$, if arity $n\geq 2$, or $z_1^n = F^s(y_3, y_{4}, \dots, y_{n}, x_{n+1}, x_{n+2})$, if $n\geq 3$. And so on.
    \end{itemize}
The deciphering algorithm is based on the fact that  orthogonal system of n n-ary operations (\ref{SYSTEM_16})
  has a unique solution for any tuple of elements $a_1, \dots , a_n$.
 \end{algorithm}

Algorithm  \ref{ALG3} is sufficiently safe relative to  chosen ciphertext and plaintext attack since the key is a non-periodic sequence  of applications of permutation $F$, i.e. sequence of powers  of  permutation $F$.
Therefore any permutation of the group $\left< F \right>$  can be used by ciphering information using Algorithm \ref{ALG3}.

Recall  application of only one step Algorithm  \ref{ALG3} is not very safe since this procedure is not resistant relatively  chosen ciphertext attack and chosen plaintext attack.

\section{Combined algorithms}

\subsection{Modifications of Algorithm  \ref{ALG3}}

By our opinion some modifications of this algorithm are desirable.
Following "vector ideas"  \cite{MN_MP_09} we propose as the first step  to write any letter $u_i$ of a plaintext as $n$-tuple ($n$-vector) and after that to apply Algorithm  \ref{ALG3}. For example it is possible to use a binary representation of characters of the alphabet $A$.

It is possible to divide plain text $u_1, \dots, u_n$ on  parts and to use Algorithm  \ref{ALG3} to some parts, to a text a part of which has been ciphered by Algorithm \ref{ALG3} on a previous ciphering round.

It is possible to change in  Algorithm \ref{ALG3}  variables  $x_1, \dots, x_k$ $(1\leq k \leq (n-1))$ by some fixed elements of the set $Q$ and name these elements as leaders. Notice, if  $k=n-1$, then we obtain  $n$ chipering images from  any plaintext letter $u$.

If in  a system of orthogonal $n$-ary operations there is at least one $n$-ary quasigroup, then we can apply by ciphering of information Algorithm \ref{ALGn_ar} and Algorithm \ref{ALG3} together with some non-periodical frequency, i.e., for example, we can apply four times Algorithm \ref{ALGn_ar} and after this we can apply five times  Algorithm \ref{ALG3} and so on.

It is possible to  use as a period sequence decimal representation of an irrational or transcendent number.  In this case we can take as a key the sequence of application of Algorithm \ref{ALGn_ar} and Algorithm \ref{ALG3}.

 Proposed modifications  make  realization of chosen plaintext attack and chosen ciphertext attack more complicate.

Taking into consideration that in binary case one application of Algorithm \ref{ALG3} generates from one plaintext symbol $u$  two cipher symbols, say $v_1, v_2$, we may propose to apply Algorithm  \ref{ALG3} for two plaintext symbols (or to one cipher symbol and one plain symbol, else to two cipher symbols) simultaneously.

 We propose to use Algorithm \ref{ALGn_ar} and Algorithm \ref{ALG3} simultaneously.

\begin{algorithm} \label{ALG324} Suppose that we have a plaintext $x_1^t$, $t\geq n$.
\begin{enumerate}
\item Divide plaintext on $n$-tuples.
  \item We apply to any $n$-tuple  of plaintext $n$-ary permutation $F^{\, l}(x_1^n) = y_1^n$.
  \item To $n$-tuple $y_1^n$ we apply Algorithm \ref{ALGn_ar} (its binary or $k$-ary variant) $G(y_1^n) = z_1^n$. Probably it will be better, if $k<n$. \item We apply to  $n$-tuple $z_1^n$  $n$-ary permutation $F^s(z_1^n) = t_1^n$.
 \end{enumerate}
Deciphering algorithm is clear.
 \end{algorithm}

Below we denote the action of  the left (right, middle) translation  in the  power $a$ of a binary quasigroup $(Q, g_1)$   on the element $u_1$ by the symbol ${}_{g_1}T^a_{l_1}(u_1)$. And so on.

\begin{algorithm} \label{BIN_ALG4} Enciphering. Initially we have plaintext  $u_1, u_2, \dots, u_6$.
\begin{equation} \label{FORMULA_12}
\begin{split}
& Step \;1.  \\
& {}_{g_1}T^a_{l_1}(u_1) = v_1  \\
& {}_{g_2}T^b_{l_2}(u_2) = v_2\\
& F_1^c(v_1, v_2) = (v'_1, v'_2)\\
& Step \;2. \\
& {}_{g_3}T^d_{v'_1}(u_3) = v_3\\
& {}_{g_4}T^e_{v'_2}(u_4) = v_4\\
& F_2^f(v_3, v_4) = (v'_3, v'_4)\\
& Step \;3.\\
& {}_{g_5}T^g_{v'_3}(u_5) = v_5\\
& {}_{g_6}T^h_{v'_4}(u_6) = v_6\\
& F_3^i(v_5, v_6) = (v'_5, v'_6)
\end{split}
\end{equation}
And so on.
We obtain ciphertext $v'_1, v'_2, \dots, v'_6$.

Deciphering. Initially we have ciphertext $v'_1, v'_2, \dots, v'_6$.
\begin{equation}\label{FORMULA_212}
\begin{split}
& Step \;1. \\
& F_1^{-c}(v'_1, v'_2) = (v_1, v_2)\\
& {}_{g_1}T^{-a}_{l_1}(v_1) = u_1\\
& {}_{g_2}T^{-b}_{l_2}(v_2)  = u_2\\
& Step \;2. \\
& F_2^{-f}(v'_3, v'_4) = (v_3, v_4)\\
& {}_{g_3}T^{-d}_{v'_1}(v_3) = u_3\\
& {}_{g_4}T^{-e}_{v'_2}(v_4) = u_4\\
& Step \;3. \\
& F_3^{-i}(v'_5, v'_6) = (v_5, v_6)\\
& {}_{g_5}T^{-g}_{v'_3}(v_5) = u_5\\
& {}_{g_6}T^{-h}_{v'_4}(v_6) = u_6\\
\end{split}
\end{equation}
We obtain plaintext  $u_1, u_2, \dots, u_6$.
\end{algorithm}

It is clear that Algorithm \ref{ALG1_POWER_OF_TRANSL}  is a partial case of Algorithm \ref{BIN_ALG4}.

As in Algorithm \ref{ALG1_POWER_OF_TRANSL},  in Algorithm \ref{BIN_ALG4} the elements $a, b, c, \dots, h$ should be vary in order to protect this algorithm  against chosen  plain-text and chosen cipher-text attack.

   Algorithm \ref{BIN_ALG4} allows  to obtain almost "natural" stream cipher, i.e. stream cipher that encode a pair of elements of a plaintext on any step.  It is easy to see that Algorithm \ref{BIN_ALG4} can be generalized on $n$-ary ($n\geq 3$) case. One of the possible generalizations is realized in Algorithm  \ref{ALG3124}.

Additional researches are necessary for the proposed in this subsection modifications.

\subsection{Stream cipher on base of orthogonal system of binary parastrophic quasigroups}

This subsection is more of algebraic than cryptographical character. For the construction of Algorithms \ref{ALGn_ar} and  \ref{ALG3} we propose the use  of  orthogonal systems of binary parastrophic quasigroups.

We start from the following theorem \cite{MS05}. Here expression
$A\bot {{}^{(23)}A}$ means that  quasigroups $(Q, A)$ and $(Q, {{}^{(23)}A})$ are orthogonal.

\begin{theorem} \label{TH2} For a finite  quasigroup $(Q, A)$ the following equivalences are
fulfilled:

(i)    $ A\bot {{}^{(12)}A}\Longleftrightarrow ((x \backslash  z) \cdot x  = (y \backslash  z)
\cdot y \Longrightarrow x=y)$;

(ii)    $A\bot {{}^{(13)}A} \Longleftrightarrow (zx\cdot x = zy\cdot y \Longrightarrow x=y)
$;

(iii) $ A \bot {{}^{(23)}A} \Longleftrightarrow (x\cdot xz = y\cdot yz \Longrightarrow x=y)
$;

(iv)   $A \bot {{}^{(123)}A} \Longleftrightarrow (x\cdot zx  = y\cdot zy   \Longrightarrow x=y)$;

(v)    $A  \bot {{}^{(132)}A}  \Longleftrightarrow (x z\cdot x
  = yz\cdot y   \Longrightarrow x=y)$

 for all $x, y, z \in Q$.
\end{theorem}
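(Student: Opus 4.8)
The plan is to reduce every line of the theorem to one elementary fact: over a finite set a self-map is bijective iff it is injective, and, by Definition~\ref{n_ORTHOGON} with $n=2$, orthogonality of two binary operations is exactly bijectivity of their ``pairing'' map. Concretely, for a parastrophe ${}^{\sigma}A$ I set $\theta_\sigma(x,y)=(A(x,y),\,{}^{\sigma}A(x,y))$, so that $A\bot{}^{\sigma}A$ holds iff $\theta_\sigma\colon Q^2\to Q^2$ is a bijection. Writing $A=\cdot$, the three one-place parastrophes are the two divisions and the opposite product, ${}^{(23)}A(x,y)=x\backslash y$, ${}^{(13)}A(x,y)=x/y$, ${}^{(12)}A(x,y)=y\cdot x$, while the two ``rotations'' ${}^{(123)}A$, ${}^{(132)}A$ act as the one-sided divisions $y/x$ and $y\backslash x$; which of the two carries which label depends on the parastrophe convention, so to each case I simply attach the division that produces the displayed right-hand side.

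Next I would precompose $\theta_\sigma$ with a bijection $\alpha$ of $Q^2$ that replaces one of $x,y$ by an auxiliary variable $z$ through a translation or a division. Since in a quasigroup $L_x,R_x$ and their inverses are bijections, every such $\alpha$ is genuinely invertible, and the substitutions are chosen so that one output coordinate of $\theta_\sigma\circ\alpha$ collapses to $z$ while the other becomes precisely the left-hand side of the stated identity. Explicitly: for (iii) put $y=xz$, so that $\theta_{(23)}(x,xz)=(x\cdot xz,\ x\backslash(xz))=(x\cdot xz,\ z)$ by (\ref{(3e)}); for (ii) put $x=zy$, giving $(zy\cdot y,\ z)$ by (\ref{(4e)}); for (i) put $y=x\backslash z$, giving $(z,\ (x\backslash z)\cdot x)$ by (\ref{(1e)}); and for (iv), (v) put $y=zx$ and $x=yz$, giving $(x\cdot zx,\ z)$ by (\ref{(4e)}) and $(yz\cdot y,\ z)$ by (\ref{(3e)}), respectively.

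The key step is then to read off bijectivity of the resulting ``fibered'' map $\Phi(x,z)=(E(x,z),\,z)$: such a $\Phi$ is a bijection of $Q^2$ iff for every fixed $z$ the one-variable map $x\mapsto E(x,z)$ is a bijection of $Q$, and because $Q$ is finite this is the same as injectivity, i.e.\ exactly the implication $E(x,z)=E(y,z)\Rightarrow x=y$ quantified over all $x,y,z$. Chaining the equivalences $A\bot{}^{\sigma}A\iff\theta_\sigma\text{ bijective}\iff\Phi\text{ bijective}\iff\text{fiberwise injective}$ then yields each line of the theorem in both directions simultaneously, so no separate converse argument is required; the reparametrization $\alpha$ being a true bijection is what makes the biconditional free.

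I expect the only real obstacle to be bookkeeping rather than mathematics: choosing for each $\sigma$ the correct substitution and checking that the ``other'' coordinate genuinely collapses to $z$ (this is where the defining identities (\ref{(1e)})--(\ref{(4e)}) are consumed), together with correctly matching the rotation parastrophes ${}^{(123)}A$, ${}^{(132)}A$ to the divisions $y/x$, $y\backslash x$ in the convention of \cite{MS05}, since swapping that convention interchanges cases (iv) and (v). Finiteness of $Q$ is essential and is used exactly once per case, to promote fiberwise injectivity to bijectivity (equivalently, to upgrade ``at most one solution'' to ``a unique solution'' in Definition~\ref{n_ORTHOGON}).
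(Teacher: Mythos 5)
Your argument is correct, but there is nothing in the paper to compare it against: the paper states Theorem \ref{TH2} as a quotation from \cite{MS05} and gives no proof at all, so your write-up is a genuinely self-contained reconstruction. The chain of equivalences you use is sound: by Definition \ref{n_ORTHOGON} with $n=2$, $A\bot{}^{\sigma}A$ is exactly bijectivity of $\theta_\sigma(x,y)=(A(x,y),{}^{\sigma}A(x,y))$ on $Q^2$; each of your substitutions is a bijection of $Q^2$ because quasigroup translations are bijections; the collapses of one coordinate are precisely identities (\ref{(1e)}), (\ref{(3e)}), (\ref{(4e)}); and a fiber-preserving map $(x,z)\mapsto(E(x,z),z)$ on a finite square is bijective iff each section $x\mapsto E(x,z)$ is injective, which is the displayed implication. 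I verified all five substitutions and they do produce the stated right-hand sides. One point, however, deserves more than the hedge you give it: your labeling ${}^{(123)}A(x,y)=y/x$, ${}^{(132)}A(x,y)=y\backslash x$ is not a free choice to be attached case by case; it is \emph{forced} if (iv) and (v) are to read as stated, and it is the \emph{opposite} of the convention implicit in the paper's own identities (\ref{(3ec)}) and (\ref{(4ec)}), which give ${}^{(123)}A(x,y)=y\backslash x$ and ${}^{(132)}A(x,y)=y/x$. Under that latter convention your computation still goes through verbatim but with (iv) and (v) interchanged: substituting $x=yz$ into $(x\cdot y,\,y\backslash x)$ gives $(yz\cdot y,\,z)$ by (\ref{(3e)}), so $A\bot{}^{(123)}A$ would then be equivalent to $xz\cdot x=yz\cdot y\Rightarrow x=y$. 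Thus the clash you sensed is real and lies in the paper (the quoted theorem and the quoted identities use conflicting parastrophe conventions); a rigorous version of your proof should fix one convention explicitly at the outset and, if it is the paper's, restate (iv) and (v) accordingly rather than resolve the ambiguity per case.
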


In order to construct quasigroups mentioned in Theorem \ref{TH2}
probably computer search is preferable. It is possible to use   GAP and   Prover \cite{MAC_CUNE_PROV}.

\begin{definition}
A  $T$-quasigroup $(Q,A)$ is a quasigroup of the form $A(x, y) = \varphi x + \psi y + c$, where  $(Q, +)$ is an abelian group, $\varphi, \psi$ are some fixed automorphisms of this group, $c$ is a fixed element of the set $Q$ \cite{pntk, tkpn}.
\end{definition}

If $(Q,\cdot)$ is a $T$-quasigroup of  the form $x\cdot y = \varphi  x + \psi y + c$, then  its parastrophes
have the following forms, respectively:
\begin{equation} \label{PARAST_OF_T_QUAS}
\begin{split}
& x \overset{(12)}{\cdot} y = \psi x + \varphi  y   + c,\\
& x \overset{(13)}{\cdot} y =  \varphi^{-1} x - \varphi^{-1} \psi y - \varphi^{-1} c,\\
& x \overset{(23)}{\cdot} y = - \psi^{-1} \varphi x + \psi^{-1} y - \psi^{-1} c,\\
& x \overset{(123)}{\cdot} y =  - \varphi^{-1} \psi x + \varphi^{-1} y - \varphi^{-1} c,\\
& x \overset{(132)}{\cdot} y =  \psi^{-1} x - \psi^{-1} \varphi y  - \psi^{-1} c.
\end{split}
\end{equation}
See, for example, \cite{MS05}.

In order to construct a quasigroup $(Q, A)$ that is orthogonal with its parastrophe in more theoretical way it is possible to use the following theorem \cite{MS05}.

\begin{theorem} \label{T1}
For a $T$-quasigroup $(Q,A)$ of the form $A(x, y) =  \varphi x + \psi y + c$ over an abelian group $(Q, +)$ the
following equivalences are fulfilled:

(i) $A \bot {}^{12}A \Longleftrightarrow (\varphi - \psi), (\varphi + \psi)$ are permutations of the set $Q$;

(ii) $A\bot {}^{13}A \Longleftrightarrow (\varepsilon + \varphi)$ is a permutation of the set $Q$;

(iii) $A\bot {}^{23}A \Longleftrightarrow (\varepsilon + \psi)$ is a permutation of the set $Q$;

(iv) $A\bot {}^{123}A \Longleftrightarrow (\varphi+ \psi^2)$ is a permutation of the set $Q$;

(v) $A\bot {}^{132}A \Longleftrightarrow (\varphi^2 + \psi)$ is a permutation of the set $Q$.
\end{theorem}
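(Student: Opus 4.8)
The plan is to reduce each orthogonality condition to a statement about a single map being a bijection, using the explicit parastrophe formulas in (\ref{PARAST_OF_T_QUAS}) together with the characterizations in Theorem \ref{TH2}. First I would recall that for a finite quasigroup, orthogonality of $A$ with a parastrophe is equivalent to one of the cancellation-type implications in Theorem \ref{TH2}, and that on a finite set an operator of the form $\chi\colon Q\to Q$ between the relevant differences is injective iff it is a permutation. Since every $T$-quasigroup map is of the shape (group endomorphism applied to a variable) plus a constant, each implication collapses to ``$\chi(x)=\chi(y)\Rightarrow x=y$'' for some fixed $\chi$ built from $\varphi,\psi,\varepsilon$ (where $\varepsilon$ is the identity automorphism), and $\chi$ is $Q$-linear, so its injectivity is exactly the statement that $\chi$ is a permutation of $(Q,+)$.

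\medskip

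Concretely, I would treat the five cases in the order (iii), (ii), (iv), (v), (i), doing the two ``$+$'' single-term cases first since they are the cleanest. For item (iii), substitute the $T$-quasigroup form into the characterization $A\bot{}^{(23)}A\Longleftrightarrow(x\cdot xz=y\cdot yz\Rightarrow x=y)$: expanding $x\cdot xz=\varphi x+\psi(\varphi x+\psi z+c)+c$ and the analogous expression in $y$, the terms involving $z$ and the constant $c$ cancel from both sides, leaving $(\varphi+\psi)x+\dots$—no, more carefully, the coefficient of the repeated variable is $(\varepsilon+\psi)$ once one collects $x$ and $\psi\varphi x$; so the implication becomes $(\varepsilon+\psi)$ applied to $(x-y)$ equals $0$ forces $x=y$, i.e. $(\varepsilon+\psi)$ is injective, hence a permutation. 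Item (ii) is symmetric, yielding $(\varepsilon+\varphi)$. For (iv) and (v) the same substitution into $x\cdot zx=y\cdot zy$ and $xz\cdot x=yz\cdot y$ produces the operators $(\varphi+\psi^2)$ and $(\varphi^2+\psi)$ respectively, after collecting the coefficient of the doubled variable and cancelling the $z$- and $c$-terms.

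\medskip

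Item (i) is the one I expect to be the main obstacle, because it is the only condition producing two operators rather than one. Here the characterization is $A\bot{}^{(12)}A\Longleftrightarrow((x\backslash z)\cdot x=(y\backslash z)\cdot y\Rightarrow x=y)$, which mixes the operation $\cdot$ with its $(23)$-parastrophe $\backslash={}^{(23)}\cdot$. I would rewrite $x\backslash z$ using the third line of (\ref{PARAST_OF_T_QUAS}), substitute into the product, and simplify; the awkward point is that the resulting expression is affine in both $x$ and the parameter $z$, so one must argue that the implication holds for all $z$ and extract that injectivity must hold for the map obtained by collecting the $x$-coefficient. The computation naturally factors so that both $(\varphi-\psi)$ and $(\varphi+\psi)$ must be injective (equivalently permutations), which is exactly the stated condition. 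The key technical fact throughout is that a group endomorphism of a finite abelian group is injective iff surjective iff a permutation, so ``injective'' and ``permutation'' may be used interchangeably; once each case is reduced to the injectivity of the indicated sum of automorphisms, the proof is complete by Theorem \ref{TH2}.
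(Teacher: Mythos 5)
There is nothing in the paper to compare you against: Theorem \ref{T1} is quoted from \cite{MS05} without proof, so your proposal must be judged on its own merits and against the tools the paper imports. For cases (ii)--(v) your reduction is correct: substituting $x\cdot y=\varphi x+\psi y+c$ into the corresponding implication of Theorem \ref{TH2}, the $z$-terms and constants agree on both sides and cancel, and the collected coefficient of the repeated variable is $(\varepsilon+\varphi)\psi$ in (ii), $(\varepsilon+\psi)\varphi$ in (iii), $\varphi+\psi^2$ in (iv), and $\varphi^2+\psi$ in (v); since $\varphi,\psi$ are automorphisms and an endomorphism of a finite abelian group is injective iff it is a permutation, each equivalence follows. (Keep the outer bijective factor explicit: in (iii) the coefficient is $(\varepsilon+\psi)\varphi$, not $(\varepsilon+\psi)$; injectivity transfers only because $\varphi$ is a bijection.)

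The genuine soft spot is exactly where you expected it, case (i), but for a reason you did not name: the automorphisms $\varphi$ and $\psi$ need not commute, so the ``natural'' factorization $\psi^2-\varphi^2=(\psi-\varphi)(\psi+\varphi)$ is not available in $\mathrm{End}(Q,+)$. Your substitution gives injectivity of $\chi=\psi-\varphi\psi^{-1}\varphi$, and what rescues the argument is the commutativity-free identity
\[
\chi=(\psi-\varphi)\,\psi^{-1}\,(\psi+\varphi),
\]
verified by expanding $(\psi-\varphi)(\varepsilon+\psi^{-1}\varphi)$. Since a composition of endomorphisms of a finite abelian group is injective iff each factor is bijective, $\chi$ is injective iff $\varphi-\psi$ and $\varphi+\psi$ are permutations, as claimed; without this identity your case (i) is unproved. (Alternatively, (i) follows directly from the definition of orthogonality: the homogeneous system $\varphi u+\psi v=0$, $\psi u+\varphi v=0$ yields $(\varphi-\psi)(u-v)=0$ by subtraction and then $(\varphi+\psi)u=0$ by back-substitution, while a nonzero $w$ in the kernel of $\varphi-\psi$, respectively of $\varphi+\psi$, gives the nonzero solution $(w,-w)$, respectively $(w,w)$.) A second caveat: your route proves the theorem only for finite $Q$, since Theorem \ref{TH2} is stated for finite quasigroups and you use injective $\Leftrightarrow$ permutation, whereas Theorem \ref{T1} carries no finiteness hypothesis. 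The derivation the paper itself makes available --- apply Theorem \ref{T_ORTHOGON} to the parastrophe forms (\ref{PARAST_OF_T_QUAS}); for instance in case (iii) the map $\alpha^{-1}\beta-\gamma^{-1}\delta$ equals $\varphi^{-1}(\varepsilon+\psi)$ --- needs no finiteness (although its case (i) requires the same identity as above), and is the natural route to the result of \cite{MS05}.
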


\begin{corollary}\label{PAR_ORTH_CYCL}
$T$-quasigroup $(Z_{p}, \circ)$ of the form $x\circ y = k \cdot x + m \cdot y + c$, where $(Z_{p}, +)$ is the cyclic group of a prime order $p$, $k, m, c \in Z_p$; $k, m, k+m, k-m,  k+1, m+1, k^2+m, k+m^2 \neq 0 \pmod{p}$,  where the operation $\cdot$ is multiplication modulo  $p$, is orthogonal to any of its parastrophes.
\end{corollary}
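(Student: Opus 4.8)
The plan is to specialize Theorem~\ref{T1} to the group $(Z_p, +)$, where every automorphism and every relevant endomorphism is simply multiplication by a scalar. First I would record that the endomorphism ring of the cyclic group $(Z_p, +)$ is isomorphic to the field $Z_p$: each endomorphism has the form $x \mapsto a x$ for a unique $a \in Z_p$, addition of endomorphisms corresponds to addition of the scalars, and composition corresponds to multiplication. In particular the automorphisms of $(Z_p, +)$ are exactly the maps $x \mapsto a x$ with $a \neq 0$, so writing $\varphi x = k x$ and $\psi y = m y$ forces $k, m \neq 0 \pmod{p}$; this accounts for the first two non-vanishing conditions in the statement.

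The crux of the argument is the observation that, since $Z_p$ is a field, an endomorphism $x \mapsto a x$ of $(Z_p, +)$ is a permutation of $Z_p$ if and only if $a \neq 0 \pmod{p}$ (multiplication by a nonzero element of a field is a bijection, while multiplication by $0$ is the constant zero map). Under the identification above, the combinations of $\varphi$ and $\psi$ appearing in Theorem~\ref{T1} translate into scalars: $\varphi - \psi$ and $\varphi + \psi$ become multiplication by $k - m$ and $k + m$; $\varepsilon + \varphi$ and $\varepsilon + \psi$ become multiplication by $k + 1$ and $m + 1$ (here $\varepsilon$ is the identity automorphism, i.e.\ the scalar $1$); and $\varphi + \psi^2$ and $\varphi^2 + \psi$ become multiplication by $k + m^2$ and $k^2 + m$.

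Finally I would read off the five equivalences of Theorem~\ref{T1} through this dictionary. Part (i) yields $A \bot {}^{12}A$ precisely when $k - m \neq 0$ and $k + m \neq 0$; parts (ii) and (iii) yield $A \bot {}^{13}A$ and $A \bot {}^{23}A$ precisely when $k + 1 \neq 0$ and $m + 1 \neq 0$; and parts (iv) and (v) yield $A \bot {}^{123}A$ and $A \bot {}^{132}A$ precisely when $k + m^2 \neq 0$ and $k^2 + m \neq 0$. Together with $k, m \neq 0$ these are exactly the hypotheses listed in the corollary, so all five parastrophes are orthogonal to $A$ simultaneously. The argument involves no real obstacle beyond careful bookkeeping; the only point demanding attention is the correct identification of sums and squares of the automorphisms $\varphi, \psi$ with the corresponding scalar arithmetic in the field $Z_p$, and the matching of the eight resulting non-vanishing conditions with the eight hypotheses of the statement.
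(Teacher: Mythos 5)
Your proof is correct and follows exactly the route the paper intends: the paper states Corollary~\ref{PAR_ORTH_CYCL} immediately after Theorem~\ref{T1} with no written proof, treating it as a direct specialization of that theorem to $(Z_p,+)$, where endomorphisms are scalar multiplications and invertibility means the scalar is nonzero. Your identification of $\varphi, \psi$ with $k, m$ and the matching of the eight non-vanishing conditions to the five equivalences of Theorem~\ref{T1} is precisely that omitted derivation, carried out correctly.
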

 Quasigroups  from Corollary \ref{PAR_ORTH_CYCL} are  suitable objects to construct  above mentioned Algorithms (binary case).

The following table contains connections between different kinds of translations in different parastrophes of a binary quasigroup $(Q,\cdot)$ \cite{SCERB_03, SCERB_07}.

\hfill Table 1. \label{Table_0}
\[
\begin{array}{|c||c| c| c| c| c| c|}
\hline
  & \varepsilon  & (12) & (13) & (23) & (123) & (132)\\
\hline\hline
R  & R & L & R^{-1} & P & P^{-1} & L^{-1} \\
\hline
L  & L & R & P^{-1} & L^{-1} & R^{-1} & P \\
\hline
P  & P & P^{-1} & L^{-1} & R & L & R^{-1} \\
\hline
R^{-1} & R^{-1} & L^{-1} & R & P^{-1} &  P & L \\
\hline
L^{-1}  & L^{-1} & R^{-1} & P & L & R & P^{-1} \\
\hline
P^{-1}  & P^{-1} & P & L & R^{-1} & L^{-1} & R \\
\hline
\end{array}
\]

From Table 1 it follows,  for example, that $R^{(13)} = R^{-1}$.

\subsection{T-quasigroup based stream code}

We give a numerical example   of encryption Algorithm \ref{BIN_ALG4} based on $T$-quasigroups. Notice the number $257$ is prime.

\begin{example} \label{ALG0123} Take the cyclic group $(Z_{257}, +)= (A, +)$.
\begin{enumerate}
  \item Define  T-quasigroup $(A, \ast)$ with the form $x\ast y = 2\cdot x + 131\cdot y + 3$ with a leader element $l_1$, say,  $l_1=17$. Denote the mapping $x\mapsto x\ast l_1$ by the letter $R_{l_1}$, i.e. $R_{l_1}(x) = x\ast l_1$ for all $x\in A$.

In order to find the mapping $R_{l_1}^{-1}$ taking into consideration Table 1  we find the form of operation $ \overset{(13)}{\ast}$ using formula (\ref{PARAST_OF_T_QUAS}).   We have    $x \overset{(13)}{\ast} y =  129 \cdot  x + 63 \cdot y + 127$, $R_{l_1}^{-1}x = x \overset{(13)}{\ast} l_1 = R_{l_1}^{(1 3)}x$.

In some sense quasigroup $(A, \overset{(13)}{\ast})$ is the "right inverse quasigroup" to quasigroup  $(A, \ast)$. From identity
(\ref{(4ec)})  follows that quasigroup $(A, \overset{(132)}{\ast})$ is the "left inverse" quasigroup to quasigroup  $(A, \ast)$.
       Notice from Corollary \ref{PAR_ORTH_CYCL}  follows that $(A, \ast) \bot (A, \overset{(13)}{\ast})$.
\item
Define  T-quasigroup $(A, \circ)$ with the form $x\circ y = 10\cdot x + 81\cdot y + 53$ with a leader element $l_2$, say,  $l_2 = 71$. Denote the mapping $x\mapsto l_2\ast x$ by the letter $L_{l_2}$, i.e. $L_{l_2}(x) = l_2\circ x$ for all $x\in A$.

 In order to find the mapping $L^{-1}_{l_2}$  we use Table 1 and find the form of operation $\overset{(23)}{\circ}$ by  formula (\ref{PARAST_OF_T_QUAS}). We have  $x \overset{(23)}{\circ} y = 149\cdot  x + 165\cdot y +250$.

  \item Define a system  of two parastroph orthogonal T-quasigroups $(A, \cdot)$ and  $(A, \overset{(23)}{\cdot})$  in the following way
  \[\left\{
\begin{array}{ll}
 x\cdot y = 3\cdot x + 5\cdot y + 6\\
 x \overset{(23)}{\cdot} y = 205 \cdot  x + 103 \cdot y + 153
\end{array}
\right.
\]

Denote quasigroup system  $(A, \cdot, \overset{(23)}{\cdot})$ by $F(x,y)$, since  this system  is a function of two variables.

In order to find the mapping $F^{-1}(x,y)$ we solve the system of linear equations
\[\left\{
\begin{array}{ll}
3\cdot x + 5\cdot y + 6 & = a \\
205 \cdot  x + 103\cdot y + 153 & = b
\end{array}
\right.
\]

We have $\Delta = 55$, $1/\Delta = 243$, $x = 100\cdot a + 70\cdot b + 255$, $y = 43\cdot a + 215\cdot b$.
Therefore we have, if $F(x, y) = (a, b)$, then
 $F^{-1}(a, b) = (100\cdot a + 70\cdot b + 255, 43\cdot a + 215\cdot b) $, i.e.
\[\left\{
\begin{array}{ll}
x = 100\cdot a + 70\cdot b + 255\\
y = 43\cdot a + 215\cdot b
\end{array}
\right.
\]
\end{enumerate}

We have defined the mappings $g_1 = R_{l_1}$, $g_2 = L_{l_2}$, $F$ and now we can use  them in Algorithm \ref{BIN_ALG4}.

Let $212;\, 17;\, 65;\, 117$ be a plaintext. We take the following values in formula~(\ref{FORMULA_12}): $a = b = d = e = f = 1; c = 2$. Below
we use Gothic font to distinguish leader elements, i.e.  $\frak{17}, \frak{71}$ are  leader elements.
Then

\medskip

Step 1.

$g_1(212) = 212\ast \frak{17} = 2 \cdot 212 + 131 \cdot 17 + 3 = 84$

$g_2(17) = \frak{71}\circ 17 = 10 \cdot 71 + 81 \cdot 17 + 53 = 84$

$F(84;84) = (3\cdot 84 + 5\cdot 84 + 6; 205\cdot 84 + 103\cdot 84 + 153) = (164; 68)$

$F(164;68) = (3\cdot 164 + 5\cdot 68 + 6; 205\cdot 164 + 103\cdot 68 + 153) = \textbf{(67; 171)}$

\medskip

Step 2.

$g_1(65) = 65\ast 67 = 2 \cdot 65 + 131 \cdot 67 + 3 = 172$

$g_2(117) = 171\circ 117 = 10 \cdot 171 + 81 \cdot 117 + 53 = 189$

$F(172;189) = (3\cdot 172 + 5\cdot 189 + 6; 205\cdot 172 + 103\cdot 189 + 153) = \textbf{(182; 139)}$

We obtain the following ciphertext $67;\, 171;\, 182;\, 139$.

\medskip

For deciphering we  use   formula~(\ref{FORMULA_212}).

\medskip

Step 1.

$F^{-1}(67; 171) = (100\cdot 67 + 70\cdot 171 + 255, 43\cdot 67 + 215\cdot 171) = (164; 68)$

$F^{-1}(164; 68) = (100\cdot 164 + 70\cdot 68 + 255, 43\cdot 164 + 215\cdot 68) = (84; 84)$

$g_1^{-1}(84) = 84\overset{(13)}{\ast} 17 = 129 \cdot 84 + 63 \cdot 17 + 127 = \textbf{212}$

$g_2^{-1}(84) = 71\overset{(23)}{\circ} 84 = 149\cdot 71 + 165\cdot 84 + 250 = \textbf{17}$

\medskip

Step 2.

$F^{-1}(182; 139) = (100\cdot 182 + 70\cdot 139 + 255, 43\cdot 182 + 215\cdot 139) = (172; 189)$

$g_1^{-1}(172) = 172 \overset{(13)}{\ast} 67 = 129 \cdot 172 + 63 \cdot 67 + 127 = \textbf{65}$

$g_2^{-1}(189) = 171\overset{(23)}{\circ} 189 = 149\cdot 171 + 165\cdot 189 + 250 = \textbf{117}$
\end{example}

A little program using freeware version of programming language Pascal was developed. First little experiments demonstrate that encoding-decoding is executed sufficiently fast.\footnote{The author thanks D.I. Pushkashu and A.V. Shcherbacov for their  help by the writing of this program.}

\begin{remark}
Proper binary groupoids are more preferable than linear quasigroups by construction of the mapping $F(x,y)$ in order to make  encryption more safe, but in this case decryption may be slower than in linear quasigroup case and definition of these groupoids needs  more computer (or some other device) memory. The same remark is true for the choice of the function $g$. Maybe a golden mean in this choice problem is to use linear quasigroups over non-abelian, especially simple,  groups.
\end{remark}

\begin{remark}
In this cipher there exists a  possibility of protection against standard statistical attack. For this scope it is possible to denote more often used letters or pair of letters by more than one integer or by more than one  pair of integers.
\end{remark}

\subsection{Some generalization of functions of Algorithm \ref{BIN_ALG4}}

We give a method for the construction of  functions that it is possible to use in  cryptographical procedures. Suppose that all functions are defined on a set $Q$.
Functions $F(x_1^n)$ and $g(x_1^n)$ are functions of  $n$ variables.

Function $F$ ($n$ orthogonal groupoids, a permutation of the set $Q^n$) has  inverse function of $n$ variables $F^{-1}(x_1^n)$ such that $F(F^{-1}(x_1^n)) = F^{-1}(F(x_1^n)) = x_1^n$.

We recall, if $g$ is $n$-ary quasigroup operation, then, in general, we cannot decode values $x, y$, for example, from equality $g(\overline{a}^{\,n-2}, x, y) = b$,  but we can easy solve equation $g(\overline{a}^{\, n-1}, x) = b$ of one variable, i. e. we can decode value of variable $x$.

Taking into consideration this quasigroup feature, we describe the set (clone) of functions that it is possible to use in cryptology on base of these two kinds of functions, namely,  functions $F$ and $g$.  We shall use concept of term \cite{WIKI_7} to define cryptographical terms (cryptographical functions) inductively.

Cryptographical function (cryptographical term) below in Case 3 means that encoding and decoding of a text using this  function  (this term) is performed uniquely.

\begin{algorithm} \label{INDUCTION_CRY_TERM}
\begin{enumerate}
  \item Any individual constant is a cryptographical term.
  \item Any individual variable  is a cryptographical term.
  \item
  \begin{enumerate}
    \item If $g$ is an $n$-ary quasigroup functional constant ($(Q, g)$ is an $n$-ary quasigroup) and $t$ is a term, $b_{\,1}^{\,n}$ are individual constant,  then $g^{\,a} (b_1^{i-1}, t, b_{i+1}^n)$, $i\in \overline{1,n}$, where $a \in \mathbb{Z}$, is  a cryptographical term.
      \item   If $F$ is a permutation of a set $Q^{\,n}$ which is constructed using $n$ orthogonal $n$-ary groupoids and $t_1, t_2, \dots, t_n$ are quasigroup cryptographical terms, then $F^a(t_1, \dots, $ $ t_n)$, where $a \in \mathbb{Z}$, is a cryptographical term.
\end{enumerate}
\end{enumerate}
\end{algorithm}

\begin{example} \label{Rule_3_a}
Let $Q = B\times B$ be a non-empty set, $F$ be a pair of orthogonal groupoids every of which is defined on the set $B$, and $(Q, g)$ be a ternary quasigroup.
Then $g(q_1, q_2, F)$, where $q_1, q_2$ are fixed elements of $Q$,  is  a cryptographical term constructed following Rule 3, (a) of Algorithm   \ref{INDUCTION_CRY_TERM}.
\end{example}

In Example \ref{ALG0123}  cryptographical term $F^{\,a}(g_1, g_2)$ is constructed following Rule 3, (b) of  Al\-go\-rithm \ref{INDUCTION_CRY_TERM}.
Indeed, the function $F$ is a pair of parastrophic orthogonal $T$-quasigroups that are defined on  the set $Z_{257}$, i.e.  $F$ is a permutation of the set $Z_{257}\times Z_{257}$;  $(Z_{257}, g_1)$, $(Z_{257}, g_2)$ are binary $T$-quasigroups, and  $a = 1; 2$.

\begin{algorithm} \label{ALG3124} Suppose that we have $n$-ary permutation $F$,  $n$ procedures $G_j$ (they may be of various arity and it is supposed that leader elements are used) and plaintext $x_1^t$.

By the letter $y$ with an index we denote an element of enciphered text or a leader element.
We propose the following enciphering procedure.

The $i$-th step of this procedure can have the following form
\begin{equation} \label{FORMUL_PR_3}
{}_iF^k(G_1(y_1^m, x_i), \dots, G_n(y_1^r, x_i)) = {}_iy_1^n
\end{equation}
Deciphering algorithm is executed "from the top to the bottom" in general and "from the bottom  to the top" on any step. See more details in Algorithm \ref{BIN_ALG4}.
 \end{algorithm}

\subsection{On quasigroup based cryptcode}

Using possibilities that give us Algorithms \ref{INDUCTION_CRY_TERM} and \ref{ALG3124} we give an example  of  a quasigroup based hybrid\footnote{Hybrid  idea is sufficiently known. For example, see  \cite[page 2]{SCERB_03_1}, \cite[page 65]{SCERB_03}.} of a code and a cypher. Following Markovski, Gligoroski, and Kocarev  \cite{MARK_GLIG_KOC_08, MARK_GLIG_KOC_07},  we  name such hybrid  as a cryptcode.

We shall use Klein group  $Z_2 \oplus Z_2$, its automorphism group and the system of three ternary orthogonal groupoids (Example \ref{TERNARY_GROUPOIDS_}).

Denote   elements of the group $Z_2 \oplus Z_2$ as follows: $\{(0;0),$ $(1;0),$ $ (0;1), $ $(1;1)\}$.  The group $Aut(Z_2 \oplus Z_2)$ consists of the following automorphisms :
\[
 \left(\begin{array}{cc} 1 & 0\\ 0 &1 \end{array}\right), \left(\begin{array}{cc} 1 & 0\\ 1 &1 \end{array}\right), \left(\begin{array}{cc} 1 & 1\\ 0 &1 \end{array}\right), \left(\begin{array}{cc} 0 & 1\\ 1 &0 \end{array}\right), \left(\begin{array}{cc} 1 & 1\\ 1 &0 \end{array}\right), \left(\begin{array}{cc} 0 & 1\\ 1 &1 \end{array}\right)
\]
Denote these automorphisms by the letters $\varepsilon, \varphi_2, \varphi_3, \varphi_4, \varphi_5, \varphi_6$, respectively.

\smallskip
Notice  $\varphi_2^{2} =  \varphi_3^{2} = \varphi_4^{2} = \varepsilon, \varphi_5^{2} = \varphi_6, \varphi_6^{2} = \varphi_5$.
It is known that $Aut(Z_2 \oplus Z_2) \cong S_3$ \cite{HALL, KM}.

For convenience we give Cayley table of the group  $Aut(Z_2 \oplus Z_2)$.
\[
\begin{array}{c|cccccc}
\cdot & \varepsilon & \varphi_2 & \varphi_3 & \varphi_4 & \varphi_5 & \varphi_6\\
\hline
\varepsilon & \varepsilon & \varphi_2 & \varphi_3 & \varphi_4 & \varphi_5 & \varphi_6\\
\varphi_2 &  \varphi_2 & \varepsilon & \varphi_5 & \varphi_6 & \varphi_3 & \varphi_4 \\
\varphi_3 &  \varphi_3 & \varphi_6 & \varepsilon & \varphi_5 & \varphi_4 & \varphi_2 \\
\varphi_4 &  \varphi_4 & \varphi_5 & \varphi_6 & \varepsilon & \varphi_2 & \varphi_3 \\
\varphi_5 &  \varphi_5 & \varphi_4 & \varphi_2 & \varphi_3  & \varphi_6 & \varepsilon \\
\varphi_6 &  \varphi_6 & \varphi_3 & \varphi_4 & \varphi_2  & \varepsilon & \varphi_5 \\
\end{array}
\]

Information on codes is in \cite{BLAHUT}. We shall use  a  code that is given in  \cite[Example 19]{MS1}. Suppose that the symbols $x, y$ are informational symbols and the symbol $z$ is a check symbol. Remember, $x, y, z \in (Z_2 \oplus Z_2)$. We propose the following check equation $x + \varphi_5 y + \varphi_6 z = (0;0)$, i.e., we propose   the following formula to find the element $z$:
\begin{equation} \label{CHECK_FORMULA}
z = \varphi_5 x + \varphi_6 y
\end{equation}
Recall, statistical investigations of J. Verhoeff \cite{Ver69} and D.F. Beckley \cite{Bec67} have shown
that the most frequent errors made by human operators during transmission of
data are single errors (i.e. errors in exactly one component), adjacent transpositions
(in other words errors made by interchanging adjacent digits, i.e. errors of
the form $ab \rightarrow ba $), and insertion or deletion errors. We note, if all codewords
are of equal length, insertion and deletion errors can be detected easily.

Proposed code detects any single, transposition, and twin ($aa \rightarrow bb$) errors \cite{MS1}.

\medskip

Further we construct three $T$-quasigroups over the group $Z_2 \oplus Z_2$:

$(Z_2 \oplus Z_2, D)$ with the form $D(x, y) = \varphi_3 x + \varphi_6 y + a_1$;

$(Z_2 \oplus Z_2, E)$ with the form $E(x, y) = \varphi_2 x + \varphi_5 y + a_2$;

$(Z_2 \oplus Z_2, F)$ with the form $F(x, y) = \varphi_3 x + \varphi_5 y + a_3$.

\medskip

We  use the following
\begin{theorem}\label{T_ORTHOGON} A $T$-quasigroup $(Q,\cdot)$ of the form $x\cdot y = \alpha  x + \beta y + c$
 and a $T$-quasigroup  $(Q,\circ)$ of the form $x\circ y = \gamma x + \delta y + d$, both over a
 group $(Q,+)$ are orthogonal if and only if the map $\alpha^{-1}\beta - \gamma^{-1}\delta$ is an automorphism
of the group $(Q,+)$ \cite{MS05}.
\end{theorem}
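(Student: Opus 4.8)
The plan is to read the orthogonality requirement straight off Definition \ref{n_ORTHOGON} in the case $n=2$ and then solve the resulting system by elimination inside the endomorphism ring of the abelian group $(Q,+)$. Since a $T$-quasigroup lives over an \emph{abelian} group, the set $\mathrm{End}(Q,+)$ of endomorphisms is a ring under pointwise addition and composition; in particular the pointwise difference of two endomorphisms is again an endomorphism, the map $x\mapsto -x$ is an automorphism, and each of $\alpha,\beta,\gamma,\delta$ is a unit of this ring. By Definition \ref{n_ORTHOGON}, orthogonality of $(Q,\cdot)$ and $(Q,\circ)$ means exactly that for every pair $(a,b)\in Q^2$ the system
\begin{equation*}
\alpha x + \beta y + c = a, \qquad \gamma x + \delta y + d = b
\end{equation*}
has a unique solution $(x,y)\in Q^2$.

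First I would eliminate $x$. From the first equation $x = \alpha^{-1}(a-c) - \alpha^{-1}\beta y$; substituting into the second and collecting the terms in $y$ gives
\begin{equation*}
(\delta - \gamma\alpha^{-1}\beta)\,y = (b-d) - \gamma\alpha^{-1}(a-c).
\end{equation*}
As $(a,b)$ ranges over $Q^2$ the right-hand side ranges over all of $Q$ (already $b$ alone does this), and once $y$ is fixed the first equation determines $x$ uniquely. Hence the system has a unique solution for every $(a,b)$ if and only if the endomorphism $\delta - \gamma\alpha^{-1}\beta$ is a bijection of $Q$; since a bijective endomorphism of a group is automatically an automorphism, this is the same as saying $\delta - \gamma\alpha^{-1}\beta$ is an automorphism of $(Q,+)$.

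It then remains to convert this into the stated criterion, and the key algebraic identity is the factorisation
\begin{equation*}
\delta - \gamma\alpha^{-1}\beta = -\gamma\,(\alpha^{-1}\beta - \gamma^{-1}\delta),
\end{equation*}
which one verifies by expanding the right-hand side and using $\gamma\gamma^{-1}=\varepsilon$. Because $-\gamma$ is the composition of the automorphisms $x\mapsto -x$ and $\gamma$, it is itself an automorphism; composing with it preserves bijectivity. Therefore $\delta - \gamma\alpha^{-1}\beta$ is a bijection if and only if $\alpha^{-1}\beta - \gamma^{-1}\delta$ is a bijection, i.e. an automorphism of $(Q,+)$. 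Combining this with the previous paragraph yields the theorem.

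The computations here are routine; the only points demanding care are that the ambient group is abelian, so that differences of endomorphisms make sense and $-\varepsilon$ is an automorphism, and that the endomorphism ring need \emph{not} be commutative, so the order of the factors $\gamma$, $\alpha^{-1}$, $\beta$ must be respected throughout. In particular one must resist simplifying $\gamma\alpha^{-1}\beta$ as if these maps commuted; this non-commutativity is exactly why the correct left factor in the factorisation is $-\gamma$ and the surviving expression is $\alpha^{-1}\beta - \gamma^{-1}\delta$ rather than $\beta\alpha^{-1} - \delta\gamma^{-1}$. Keeping the factors in the right order is the main obstacle to a clean write-up.
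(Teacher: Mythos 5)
Your proof is correct. Note that the paper does not actually prove Theorem \ref{T_ORTHOGON}: it is stated as a quoted result with the citation \cite{MS05}, so there is no internal proof to compare against; your elimination argument (reading orthogonality off Definition \ref{n_ORTHOGON} with $n=2$, solving for $y$, observing the right-hand side sweeps all of $Q$, and the factorisation $\delta-\gamma\alpha^{-1}\beta=(-\gamma)(\alpha^{-1}\beta-\gamma^{-1}\delta)$ with $-\gamma$ an automorphism) is the standard route, and every step, including the care with non-commutativity of $\mathrm{End}(Q,+)$ and the fact that a bijective endomorphism is an automorphism, checks out.
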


\begin{lemma}
The quasigroups $(Z_2 \oplus Z_2, D)$, $(Z_2 \oplus Z_2, E)$, and
$(Z_2 \oplus Z_2, F)$ are orthogonal in pairs.
\end{lemma}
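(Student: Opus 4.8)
The plan is to apply Theorem \ref{T_ORTHOGON} separately to each of the three pairs $(D,E)$, $(D,F)$, and $(E,F)$. For two $T$-quasigroups of the forms $\alpha x + \beta y + c$ and $\gamma x + \delta y + d$ over the same group, that theorem reduces orthogonality to a single algebraic condition: the map $\alpha^{-1}\beta - \gamma^{-1}\delta$ must lie in $Aut(Z_2\oplus Z_2)$. So the whole statement amounts to verifying this condition three times. First I would read off the coefficient pairs from the given forms: $D$ has $(\alpha,\beta)=(\varphi_3,\varphi_6)$, $E$ has $(\varphi_2,\varphi_5)$, and $F$ has $(\varphi_3,\varphi_5)$; the additive constants $a_1,a_2,a_3$ are irrelevant to orthogonality. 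I would also record the inverses needed, namely $\varphi_2^{-1}=\varphi_2$, $\varphi_3^{-1}=\varphi_3$ and $\varphi_5^{-1}=\varphi_6$, which follow from the relations $\varphi_2^2=\varphi_3^2=\varepsilon$ and $\varphi_5^2=\varphi_6$, $\varphi_6^2=\varphi_5$ stated above together with the Cayley table of $Aut(Z_2\oplus Z_2)$.

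Next I would compute the three ``ratios'' $\alpha^{-1}\beta$ and $\gamma^{-1}\delta$ as products inside $Aut(Z_2\oplus Z_2)$, using the Cayley table: for $D$ one gets $\varphi_3^{-1}\varphi_6=\varphi_3\varphi_6=\varphi_2$, for $E$ one gets $\varphi_2^{-1}\varphi_5=\varphi_2\varphi_5=\varphi_3$, and for $F$ one gets $\varphi_3^{-1}\varphi_5=\varphi_3\varphi_5=\varphi_4$. Thus each orthogonality condition collapses to deciding whether a difference of two of the automorphisms $\varphi_2,\varphi_3,\varphi_4$ is again an automorphism: the pair $(D,E)$ requires $\varphi_2-\varphi_3$, the pair $(D,F)$ requires $\varphi_2-\varphi_4$, and the pair $(E,F)$ requires $\varphi_3-\varphi_4$. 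Here I would switch from the group $Aut(Z_2\oplus Z_2)$ to the endomorphism ring, representing each automorphism as a $2\times 2$ matrix over the two-element field; since $Z_2\oplus Z_2$ is an elementary abelian $2$-group, subtraction coincides with addition (and negation is the identity on automorphisms), so the order of the pair is immaterial. A direct matrix computation then shows that $\varphi_2-\varphi_3$, $\varphi_2-\varphi_4$, and $\varphi_3-\varphi_4$ are nonsingular, being in fact $\varphi_4$, $\varphi_3$, and $\varphi_2$ respectively; each is therefore one of the six listed automorphisms, and all three pairs are orthogonal.

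The main point requiring care is not any single calculation but the conceptual step that the expression $\alpha^{-1}\beta-\gamma^{-1}\delta$ must be evaluated in the endomorphism ring of $Z_2\oplus Z_2$ rather than in the group $Aut(Z_2\oplus Z_2)$: a difference of two invertible additive maps is in general only an endomorphism, with no a priori guarantee of invertibility. Consequently the substance of the proof is precisely the verification that each of the three differences has nonzero determinant over the two-element field; it is a small matter of fortune of the chosen coefficients $\varphi_2,\varphi_3,\varphi_5,\varphi_6$ that all three come out invertible, and this is what must be checked explicitly to complete the argument.
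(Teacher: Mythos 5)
Your proof is correct and takes essentially the same route as the paper: the paper's own proof is just the one-line remark that Theorem \ref{T_ORTHOGON} together with the Cayley table of $Aut(Z_2 \oplus Z_2)$ suffices, and your argument is exactly that computation carried out in full, with the correct values $\varphi_3^{-1}\varphi_6=\varphi_2$, $\varphi_2^{-1}\varphi_5=\varphi_3$, $\varphi_3^{-1}\varphi_5=\varphi_4$ and the three differences $\varphi_4$, $\varphi_3$, $\varphi_2$ all automorphisms. Your observation that the difference $\alpha^{-1}\beta-\gamma^{-1}\delta$ must be evaluated in the endomorphism ring (as matrices over the two-element field) rather than inside the group $Aut(Z_2\oplus Z_2)$ is a worthwhile detail that the paper's sketch glosses over, but it does not change the approach.
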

\begin{proof}
We can use  Theorem \ref{T_ORTHOGON} and Cayley table of the group $Aut(Z_2 \oplus Z_2)$.
\end{proof}

Define three ternary operations in the following way: $K_1(D(x, y), z) = D(x, y) + z$, $K_2(E(x, y), z) = E(x, y) + z$,
$K_3(F(x, y), z) = F(x, y) + z$.

\begin{lemma}
The triple of ternary operations $K_1(x, y, z),  K_2(x, y, z), K_3(x, y, z)$  forms  orthogonal system of operation.
\end{lemma}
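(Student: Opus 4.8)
The plan is to read orthogonality, in the sense of Definition \ref{n_ORTHOGON} with $k=n=3$, as the assertion that the map
$$\Phi\colon (Z_2\oplus Z_2)^3 \longrightarrow (Z_2\oplus Z_2)^3, \qquad (x,y,z)\longmapsto \bigl(K_1(x,y,z),\, K_2(x,y,z),\, K_3(x,y,z)\bigr)$$
is a bijection. Each $K_i$ is affine over the abelian group $Z_2\oplus Z_2$: by construction it has the shape $\alpha_i x+\beta_i y+z+d_i$ with $\alpha_i,\beta_i$ automorphisms and $d_i$ a constant ($\alpha_1=\varphi_3,\beta_1=\varphi_6$; $\alpha_2=\varphi_2,\beta_2=\varphi_5$; $\alpha_3=\varphi_3,\beta_3=\varphi_5$, the coefficient of $z$ being $\varepsilon$ throughout). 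Hence $\Phi$ differs from a group endomorphism of $(Z_2\oplus Z_2)^3$ only by the constant vector $(d_1,d_2,d_3)$, and since the underlying set is finite, $\Phi$ is bijective if and only if that endomorphism is injective. In particular the constants $a_1,a_2,a_3$ play no role, and it suffices to prove that the homogeneous system $\varphi_3 x+\varphi_6 y+z=0$, $\varphi_2 x+\varphi_5 y+z=0$, $\varphi_3 x+\varphi_5 y+z=0$ has only the trivial solution $x=y=z=0$.

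I would then eliminate variables by adding equations, recalling that in a group of exponent $2$ addition coincides with subtraction. Adding the first and third equations annihilates both $x$ (coefficient $\varphi_3+\varphi_3=0$) and $z$ (coefficient $\varepsilon+\varepsilon=0$), leaving $(\varphi_5+\varphi_6)y=0$; the key computation here is that $\varphi_5+\varphi_6=\varepsilon$, whence $y=0$. Substituting $y=0$, the first and second equations add to $(\varphi_2+\varphi_3)x=0$, and the second key computation is that $\varphi_2+\varphi_3=\varphi_4$, an automorphism, so $x=0$; any one of the original equations then forces $z=0$. Both identities $\varphi_5+\varphi_6=\varepsilon$ and $\varphi_2+\varphi_3=\varphi_4$ are read off either from the Cayley table above or from a direct sum of the listed $2\times2$ defining matrices modulo $2$.

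The only genuine content of the argument, and the point at which it could break for a differently chosen triple of coefficients, is precisely that the combinations of automorphisms thrown up by the elimination are themselves bijective. Over a group of exponent $2$ a sum of two automorphisms is in general only an endomorphism and is frequently singular (for example $\varphi+\varphi=0$ for every $\varphi$, and $\varepsilon+\varphi_2$ is non-invertible), so I would take care to verify that $\varphi_5+\varphi_6$ and $\varphi_2+\varphi_3$ really land on the automorphisms $\varepsilon$ and $\varphi_4$ rather than on a non-invertible map; granting this, injectivity of $\Phi$, and hence orthogonality, is immediate. An equivalent route I would keep in reserve is to identify $Z_2\oplus Z_2$ with two-dimensional vectors over the two-element field, regard the whole system as a single $6\times6$ matrix over that field, and confirm it is nonsingular; the block elimination just described is the efficient by-hand version of that determinant check.
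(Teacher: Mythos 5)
Your proof is correct, and its computational heart coincides with the paper's: row elimination over the exponent-$2$ group $Z_2\oplus Z_2$, with the two decisive facts being $\varphi_5+\varphi_6=\varepsilon$ and $\varphi_2+\varphi_3=\varphi_4$ (pointwise sums taken modulo $2$). The packaging, however, is genuinely different. The paper never passes to a homogeneous system: it solves the inhomogeneous system (\ref{TT_SYSTEM}) outright by the same row additions (row $1$ + row $3$ isolates $y$; row $2$ + row $3$ yields $\varphi_4 x = b_2+a_2+b_3+a_3$, whence $x=\varphi_4(b_2+a_2+b_3+a_3)$ because $\varphi_4^{-1}=\varphi_4$), and finishes with closed formulas for $(x,y,z)$ in terms of $(b_1,b_2,b_3)$, so existence and uniqueness of the solution are exhibited simultaneously. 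Your affine reduction --- the constants $a_i$ drop out, and bijectivity of $\Phi$ on a finite set is equivalent to injectivity of its linear part --- is cleaner and isolates the only real content, namely that the automorphism sums produced by elimination are themselves invertible; but the paper's explicit solution buys something your argument does not: the closed-form inverse $K^{-1}$, which is what one actually runs in the decoding step of Algorithm \ref{CODE_CIPHER_PROCEDURE}. One small correction: the identities $\varphi_5+\varphi_6=\varepsilon$ and $\varphi_2+\varphi_3=\varphi_4$ cannot be read off the Cayley table of $Aut(Z_2 \oplus Z_2)$, since that table records compositions rather than pointwise sums of endomorphisms; the matrix-addition check modulo $2$ that you also offer is the correct (and essentially the only) verification, and it is the one the paper implicitly relies on.
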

\begin{proof}
We solve the following system  of equations
\begin{equation} \label{TT_SYSTEM}
\left\{
\begin{split}
& \varphi_3 x + \varphi_6 y + a_1 + z  = b_1 \\
& \varphi_2 x + \varphi_5 y + a_2 + z  = b_2 \\
& \varphi_3 x + \varphi_5 y + a_3 + z = b_3  \\
\end{split}
\right.
\end{equation}
where $b_1, b_2, b_3$ are fixed elements of the set $Z_2 \oplus Z_2$.

We use  properties of the groups $(Z_2 \oplus Z_2)$ and  $Aut(Z_2 \oplus Z_2)$.
\begin{equation} \label{TT_SYSTEM_3}
\left\{
\begin{split}
& \varphi_3 x + \varphi_6 y + z = b_1 +  a_1 \\
& \varphi_2 x + \varphi_5 y + z = b_2 +  a_2 \\
& \varphi_3 x + \varphi_5 y + z = b_3 +  a_3 \\
\end{split}
\right.
\end{equation}

We are doing  the  following transformations of the system  (\ref{TT_SYSTEM_3}): (first  row + third  row) $\rightarrow$ first row; (second row + third  row) $\rightarrow$ second row; and obtain the following system:
\begin{equation} \label{TT_SYSTEM_1}
\left\{
\begin{split}
& y = b_1 +  a_1 + b_3 + a_3 \\
& x =  \varphi_4(b_2 +  a_2 + b_3 + b_4) \\
& \varphi_3 x + \varphi_5 y + z = b_3 +  a_3 \\
\end{split}
\right.
\end{equation}

If in the system (\ref{TT_SYSTEM_1})  in the third equation we replace $x$ by $\varphi_4(b_2 +  a_2 + b_3 + b_4)$ and $y$ by $b_1 +  a_1 + b_3 + a_3$,  then we obtain
\begin{equation} \label{TtT_SYSTEM}
\left\{
\begin{split}
& x =  \varphi_4(b_2 +  a_2 + b_3 + a_3) \\
& y = b_1 +  a_1 + b_3 + a_3 \\
& z =  b_3 +  a_3 + \varphi_5 (b_1 + a_1 + b_2 + a_2) \\
\end{split}
\right.
\end{equation}

Therefore  the system (\ref{TT_SYSTEM})  has a unique solution for any fixed elements $b_1, b_2, b_3\in (Z_2 \oplus Z_2)$, operations $K_1(x, y, z),  K_2(x, y, z), K_3(x, y, z)$ are orthogonal.
\end{proof}

Triple of orthogonal operations $K_1(x, y, z),  K_2(x, y, z), K_3(x, y, z)$ defines on the set $Q^{\,3}$ a permutation.
Denote this permutation  by the letter $K$.

We shall use the system of three ternary orthogonal groupoids  $(Q, A)$, $(Q, B)$,  $(Q, C)$ of order 4 from Example \ref{TERNARY_GROUPOIDS_}. See also  \cite{PS_VS_11}. Denote permutation that defines this  system of three ternary orthogonal groupoids  by the letter $M$.

In order to use the system of orthogonal groupoids and the system of orthogonal $T$-quasigroups simultaneously we redefine the basic set of the $T$-quasigroups in the following (non-unique) way $(0;0) \rightarrow 0$, $(1;0) \rightarrow 1$, $(0;1) \rightarrow 2$, $(1;1) \rightarrow 3$.

We propose the following cryptographical term (a cryptographical  primitive):
\begin{equation*}
H(x, y, z)  = M^k(K^l(x,y, z)),  k, l \in \mathbb{Z}
\end{equation*}

Transformation $H$ is a permutation of the set $Q^{\,3}$. Indeed, this transformation is a composition of two permutations: $K^{\,l}$ and
 $M^{k}$.

Therefore we propose the following
\begin{algorithm} \label{CODE_CIPHER_PROCEDURE}
\begin{enumerate}
  \item  Take a pair of information symbols $a, b \in (Z_2 \oplus Z_2)$;
  \item by formula (\ref{CHECK_FORMULA}) find value of the check symbol $c$;
  \item apply cryptographical term $H$ to the triple $(a, b, c)$;
  \item  take a pair of information symbols $d, e \in (Z_2 \oplus Z_2)$;
  \item by formula (\ref{CHECK_FORMULA}) find value of the check symbol $f$;
  \item change values of the numbers $k, l$ in the cryptographical term $H$; also it is possible to change the term $H$ by some other term of such or other type;
  \item apply cryptographical term $H$ to the triple $(d, e, f)$;
  \item and so on.
  \end{enumerate}
\end{algorithm}

Procedure of decoding in Algorithm \ref{CODE_CIPHER_PROCEDURE} is clear.

Recall, the number $N(n)$  of mutually (in pairs) orthogonal Latin squares of order $n$ fulfills the following inequality  $N(n) \leq (n-1)$ \cite{LM}. Then for $n=4$ we have $N(4) \leq 3$.
Therefore,  for real applications an analog of Algorithm  \ref{CODE_CIPHER_PROCEDURE} should be constructed over a set of order more than 4 and, probably, with more powerful code \cite{BAK_ILEV_09}.

\subsection{A comparison of the "power" of proposed algorithms}

We shall compare how many permutations and  of what length can be generated and can be used by the working of  some above mentioned algorithms.

Algorithm  \ref{ALG1}.
If we shall use only one quasigroup $(Q, \cdot)$, $|Q| = n$,   then we can obtain  by encoding not more than  $n$ permutations of the group $S_n$.

Algorithm \ref{ALG1_POWER_OF_TRANSL}.
If we shall use only one quasigroup $(Q, \cdot)$, $|Q| = n$,    then we shall use by encoding  the set $S = \cup_{i=1}^n \left< L_{a_i} \right>$ of permutations which is a subset of the left  multiplication  group $LM$  of quasigroup $(Q, \cdot)$.  We recall $LM(Q, \cdot) = \left< L_x \, | \, x \in Q \right> $ \cite{VD, HOP, SCERB_03}.

It is possible to construct a quasigroup $(Q, \cdot)$ such that $LM(Q, \cdot) = S_Q$. Notice, it is proved \cite{DR_KEPKA} that there exist quasigroups with the property $LM(Q, \cdot) = A_Q$, where $A_O$ is the alternating group defined on the set $Q$ \cite{KM, HALL}.

Therefore by encoding using Algorithm \ref{ALG1_POWER_OF_TRANSL} we can obtain not more than $|S_n| = n!$ permutations.

Situation with  Algorithm \ref{ALGn_ar} is similar to the  situation with   Algorithm  \ref{ALG1}. Since by encoding  translations of an $m$-ary quasigroup $(Q, f)$  are used,  we can obtain not more than $|S_n| = n!$ permutations. The properties of multiplication group (more exactly, multiplication groups) of $n$-ary quasigroups are not researched well.

Information on the multiplication groups of linear $n$-ary quasigroups is in \cite{MARS}. These quasigroups  are used  in  \cite{Petrescu_07, Petrescu_10} by construction of some ciphers (see above).

Algorithm \ref{ALG1_POWER_OF_TRANSL_N_ARY} is a synthesis of Algorithms \ref{ALG1_POWER_OF_TRANSL} and \ref{ALG1}.  Here by the symbol $T_i$ we denote translations of an $n$-ary quasigroup $(Q, f)$. It is clear that the order of the set $S = \cup \left< T_i \right>$ can be large  but cannot be more than $|S_n| = n!$.

In Algorithm \ref{ALG3}  elements of the cyclic group $\left< F \right> \subset S_{n^m}$, where $|Q| = n$, $m$ is the arity of  orthogonal groupoids, can  appear.  In the above-mentioned   inclusion  cannot be equality even theoretically, since the minimal number of generators of the symmetric group is equal to two \cite{KM, HALL}.

It is well known that a cycle of order $n$ and a cycle of order two generate the symmetric  group $S_n$ \cite{KM, HALL}.

The group $S_{n^m}$ is an upper bound of the sets of permutations that can be generated during the work of Algorithms \ref{ALG324}, \ref{ALG3124}.
For  Algorithm \ref{BIN_ALG4} the group $S_{n^2}$ is such upper bound. It is clear that in Algorithm \ref{BIN_ALG4}  by the encryption any permutation of the group $S_{n^2}$ may be realized. But it also is clear that this is not necessary from the cryptographical point of view.

The possible number of  permutation generated during the work of the algorithm from  Example \ref{ALG0123} is bounded by the number $(257^2)! = 66049!$ and during the work of Algorithm \ref{CODE_CIPHER_PROCEDURE} is bounded by the number $(64)!$.

\medskip

\noindent \textbf{Acknowledgement.}
The author started this project together with  Professor Piroska  Cs\"org\"o \cite{PS_VS_11}. Unfortunately Prof. Cs\"org\"o has informed the author that she cannot continue this project. The author is grateful to Prof. Cs\"org\"o for useful discussions and the help by writing this paper.

\medskip

\noindent \footnotesize
{Institute of Mathematics and \\
Computer Science \\
Academy of Sciences of Moldova  \\
Academiei str. 5,  MD$-$2028 Chi\c{s}in\u{a}u  \\
Moldova  \\
E-mail: \emph{scerb@math.md }}

\end{document}